\newtheorem{thm}{Theorem}[section]
\newtheorem{lemma}[thm]{Lemma}
\newtheorem{cor}[thm]{Corollary}
\theoremstyle{definition}
\newtheorem{remark}[thm]{Remark}
\numberwithin{equation}{section}
\DeclareMathOperator{\kap}{Cap}
\let\epsilon=\varepsilon
\let\phi=\varphi
\def\LL{\Lambda}
\def\ed{\,\mathrm{d}}
\DeclareMathOperator{\im}{Im}
\newcommand{\R}{\mathbb R}
\newcommand{\Ab}{\mathbf{A}}
\title[Schr\"odinger Operators in exterior domains]{Remark on magnetic Schr\"odinger operators in exterior domains}
\author{Ayman Kachmar}
\author{Mikael Persson}
\address[A. Kachmar and M. Persson]{Aarhus University, Departement of
  Mathematical Sciences, 1530 Ny Munkegade, 8000 Aarhus C, Denamrk}
\email[A. Kachmar]{akachmar@imf.au.dk}
\email[M. Persson]{mpersson@imf.au.dk}
\begin{document}

\begin{abstract}
We study the Schr\"odinger operator with a constant magnetic field
in the exterior of a two-dimensional compact domain. Functions in
the domain of the operator are subject to  a boundary condition of
the third type (Robin condition). In addition to the
Landau levels, we obtain that the spectrum of
this operator consists of clusters of eigenvalues around the Landau
levels and that they do accumulate to the Landau levels  from below.
We give a precise asymptotic formula for the rate of accumulation of
eigenvalues in these clusters, which appears to be independent from
the boundary condition.
\end{abstract}

\maketitle

\section{Introduction}

Magnetic Schr\"odinger operators in domains with boundaries appear
in several areas of physics, one can mention the Ginzburg-Landau
theory of superconductors, the theory of Bose-Einstein condensates,
and of course the study of edge states in Quantum mechanics. We
refer the reader to \cite{AfHe, FH, hosm} for details and additional
references on the subject. From the point of view of spectral
theory, the presence of boundaries has an effect similar to that of
perturbing the magnetic Schr\"odinger operator by an electric
potential. In particular, in both cases, the essential spectrum
consists of the Landau levels and the discrete spectrum form
clusters of eigenvalues around the Landau levels. Several papers are
devoted to the study of different aspects of these clusters of
eigenvalues in domains with or without boundaries. In case of
domains with boundaries, one can cite \cite{FK, Fr, HM, Krmp, Kjmp}
for results in the semi-classical context, \cite{P, puro} and the
references therein for the study of accumulation of eigenvalues.

Let us consider a {\bf compact} and {\bf simply connected} domain
$K\subset\R^2$ with a {\bf smooth} $C^\infty$ boundary. Let us
denote by $\Omega=\R^2\setminus K$. Given a function $\gamma\in
C^\infty(\partial\Omega)$ and a positive constant $b$ (the intensity
of the magnetic field), we define the Schr\"odinger operator
$L_{\Omega,b}^\gamma$ with domain $D(L_{\Omega,b}^\gamma)$ as
follows,
\begin{multline}\label{D-L}
D(L_{\Omega,b}^\gamma)=\bigl\{u\in L^2(\Omega)~:~(\nabla-ib\Ab_0)^ju\in
L^2(\Omega),~j=1,2;\\ \nu_\Omega\cdot(\nabla-ib\Ab_0)u+\gamma
u=0\quad{\rm on~}\partial\Omega\bigr\}\,,
\end{multline}
\begin{equation}\label{Op-L}
L_{\Omega,b}^\gamma u=-(\nabla-ib\Ab_0)^2u\quad\forall~u\in
D(L_{\Omega,b}^\gamma)\,.
\end{equation}
Here, $\Ab_0$ is the magnetic potential defined by
\begin{equation}\label{mp-A0}
\Ab_0(x_1,x_2)=\frac12(-x_2,x_1)\quad\forall(x_1,x_2)\in\R^2\,,
\end{equation}
and $\nu_\Omega$ is the unit {\it outward} normal vector of the
boundary $\partial\Omega$.

The operator $L_{\Omega,b}^\gamma$ is actually the Freidrich's
self-adjoint extension in $L^2(\Omega)$ associated with the
semi-bounded quadratic form
\begin{equation}\label{QF-q}
q_{\Omega,b}^\gamma(u)=\int_\Omega|(\nabla-ib\Ab_0)u|^2\ed
x+\int_{\partial\Omega}\gamma|u|^2\ed S\,,
\end{equation}
defined for all functions $u$ in the form domain of
$q_\Omega^\gamma$,
\begin{equation}\label{D-QF-q}
D(q_{\Omega,b}^\gamma)=H^1_{\Ab_0}(\Omega)=
\{u\in L^2(\Omega)~:~(\nabla-ib\Ab_0)u\in
L^2(\Omega)\}\,.
\end{equation}

The result of the present paper is the following.

\begin{thm}\label{thm-KP}
The essential spectrum of the operator $L_{\Omega,b}^\gamma$
consists of the Landau levels,
\begin{equation}\label{eq-thm-1}
\sigma_{\text{ess}}(L_{\Omega,b}^\gamma)=\{\Lambda_n~:~n\in\mathbb
N\}\,,\quad\Lambda_n=(2n-1)b\quad \forall~n\in\mathbb N\,,
\end{equation}
and for all $\varepsilon\in(0,b)$ and $n\in\mathbb N$, the spectrum
of $L_{\Omega,b}^\gamma$ in the interval
$(\Lambda_n,\Lambda_n+\varepsilon)$ is finite.
Moreover, for all $n\in\mathbb N$, denoting by
$\{\ell_j^{(n)}\}_{j\in\mathbb N}$ the increasing sequence of
eigenvalues of $L_{\Omega,b}^\gamma$ in the interval
$(\Lambda_{n-1},\Lambda_n)$, then the following limit
\begin{equation}\label{eq-thm-2}
\lim_{j\to\infty}
\left(j!(\Lambda_n-\ell_j^{(n)})\right)^{1/j}=\frac{b}2\big{(}{\rm Cap}(K)\big{)}^2\,,
\end{equation}
holds provided that $\min_{x\in\partial\Omega}|\gamma(x)|>C_0$  for
a positive geometric constant $C_0=C_0(\Omega)$. Here ${\rm Cap}(K)$
is the logarithmic capacity of the domain $K=\R^2\setminus\Omega$,
and $\Lambda_0$ is set to be $-\infty$ by convention.
\end{thm}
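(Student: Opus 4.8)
The plan is to compare $L_{\Omega,b}^\gamma$ with the free Landau Hamiltonian $\mathcal L_b=-(\nabla-ib\Ab_0)^2$ on $L^2(\R^2)$, whose spectrum is exactly $\{\Lambda_n\}_{n\in\mathbb N}$, each Landau level being an eigenvalue of infinite multiplicity; write $\Pi_n$ for the corresponding spectral projection. First I would establish \eqref{eq-thm-1} by a Dirichlet--Neumann bracketing argument: enclose $K$ in a large disc $B_R$ and decouple along $\partial B_R$, noting that the operator induced on the bounded collar $\Omega\cap B_R$ has compact resolvent (so contributes nothing to the essential spectrum), while the part outside $B_R$ is the Landau Hamiltonian on $\R^2\setminus B_R$, whose spectrum tends to $\{\Lambda_n\}$ as $R\to\infty$. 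The boundary form $\int_{\partial\Omega}\gamma|u|^2$ is localized near the compact set $\partial\Omega$ and is relatively form-compact, hence does not affect $\sigma_{\rm ess}$; combining the two-sided bracketing with the min--max principle yields $\sigma_{\rm ess}(L_{\Omega,b}^\gamma)=\{\Lambda_n\}$.

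The core of the argument is the analysis near a fixed Landau level $\Lambda_n$. Here I would use a Grushin/Feshbach--Schur reduction relative to the orthogonal splitting $L^2=\operatorname{Ran}\Pi_n\oplus(\operatorname{Ran}\Pi_n)^\perp$, which is a form of the Birman--Schwinger principle: for $\lambda$ in a punctured neighbourhood of $\Lambda_n$ lying in a spectral gap of $\mathcal L_b$, the condition $\lambda\in\sigma(L_{\Omega,b}^\gamma)$ becomes the condition that an explicit, $\lambda$-dependent compact (indeed trace class) operator on $\operatorname{Ran}\Pi_n$ --- built from the free resolvent, the passage from $\R^2$ to $\Omega$, and the boundary trace weighted by $\gamma$ --- have $-1$ in its spectrum. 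One then has to identify the leading term of this effective operator and its sign: the expectation is that it is a bounded positive multiple of the Toeplitz operator $\Pi_n\mathbf 1_K\Pi_n$, or an equivalent boundary operator, whence the eigenvalues of $L_{\Omega,b}^\gamma$ near $\Lambda_n$ accumulate at $\Lambda_n$ strictly from below and only finitely many lie in $(\Lambda_n,\Lambda_n+\varepsilon)$, with $\Lambda_n-\ell_j^{(n)}$ comparable to the $j$-th eigenvalue of the model operator. The hypothesis $\min_{\partial\Omega}|\gamma|>C_0(\Omega)$ enters precisely at this point: it guarantees that the boundary Birman--Schwinger operator (assembled from $\gamma$ and the exterior Dirichlet-to-Neumann map at $\lambda$) is boundedly invertible, so that the boundary contribution is of strictly lower order --- in the factorial eigenvalue scale --- than the $\Pi_n\mathbf 1_K\Pi_n$ term, and in particular does not influence the leading asymptotics.

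It then remains to quote the known spectral asymptotics of the model operator: if $\lambda_1^{(n)}\ge\lambda_2^{(n)}\ge\cdots>0$ are the nonzero eigenvalues of $\Pi_n\mathbf 1_K\Pi_n$ (a Toeplitz operator with compactly supported symbol), then $\lim_{j\to\infty}\bigl(j!\,\lambda_j^{(n)}\bigr)^{1/j}=\tfrac b2\bigl({\rm Cap}(K)\bigr)^2$, the logarithmic capacity of $K$ arising from the holomorphic-times-Gaussian (resp.\ Laguerre) structure of the Landau eigenfunctions together with the sharp constants for polynomial approximation on $K$; note that this limit does not depend on $n$. A perturbation argument --- comparing the effective operator with $\Pi_n\mathbf 1_K\Pi_n$ up to operators whose eigenvalues decay strictly faster in the factorial scale, and invoking a Weyl-type inequality which makes the limit $\lim_j\bigl(j!\,\mu_j\bigr)^{1/j}$ insensitive to such corrections as well as to multiplication by factors bounded above and below --- transfers this limit to $\Lambda_n-\ell_j^{(n)}$ and gives \eqref{eq-thm-2}, with a constant manifestly free of $\gamma$.

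I expect the main obstacle to be the second step, i.e.\ making the reduction rigorous at the precision required by \eqref{eq-thm-2}. The point is that the off-diagonal contributions of the Landau levels $\Lambda_m$ ($m\neq n$) and the Robin boundary term must be controlled not merely up to compactness but at the super-exponentially fine scale $(j!)^{-1}$; this is exactly what forces a quantitative threshold $C_0(\Omega)$ on $|\gamma|$, and it is also where one must verify that the surviving geometric quantity is the logarithmic capacity of $K$ rather than some competing functional of $\partial\Omega$ or of $\gamma$. Establishing this robustness of both the leading constant and the ``from below'' direction under a general Robin boundary condition is the heart of the matter.
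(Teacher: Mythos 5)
Your overall architecture --- compress the problem near $\Lambda_n$ onto $\operatorname{Ran}P_n$, identify the effective operator with the Toeplitz operator $P_n\chi_KP_n$, and quote the asymptotics $\lim_j(j!\,s_j)^{1/j}=\tfrac b2(\kap(K))^2$ --- coincides with the paper's. But the two steps you yourself flag as ``the main obstacle'' are exactly where the content lies, and the structure you commit to for them is not the one that works. First, the positivity needed both for ``finitely many eigenvalues in $(\Lambda_n,\Lambda_n+\varepsilon)$'' and for accumulation from below is never derived in your sketch. The paper obtains it at the resolvent level: it extends $L_\Omega$ to $\widetilde L=L_\Omega\oplus L_K$ on $L^2(\R^2)$ and shows $V=\widetilde L^{-1}-L^{-1}\geq 0$ by an abstract form-comparison lemma, using that the forms of $\widetilde L$ and of the Landau Hamiltonian agree on $H^1_{\Ab_0}(\R^2)$ (the Robin terms $\pm\gamma$ on the two sides of $\Gamma$ cancel) while the form domain of $\widetilde L$ is larger. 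Compactness of $V$ plus Weyl's theorem then gives \eqref{eq-thm-1}; your bracketing alternative is circular at the point where you assert that the spectrum of the Landau Hamiltonian on $\R^2\setminus B_R$ ``tends to $\{\Lambda_n\}$'' --- the essential spectrum of that exterior operator is precisely what is to be determined.

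Second, the effective operator is \emph{not} ``$P_n\chi_KP_n$ plus a boundary correction of strictly lower factorial order,'' and the largeness of $|\gamma|$ is not what suppresses the boundary contribution. What the paper proves is a two-sided sandwich $\tfrac1C\langle f,S_n^{K_0}f\rangle\leq\langle f,T_nf\rangle\leq C\langle f,S_n^{K_1}f\rangle$ for arbitrary compacts $K_0\subset K\subset K_1$, where $T_n=P_nVP_n$: one writes $\langle f,T_nf\rangle$ as a boundary pairing with an elliptic pseudo-differential operator of order $1$ built from the single- and double-layer potentials of the Landau Green function, obtains $\langle f,T_nf\rangle\asymp\|P_nf\|_{L^2(\Gamma)}\|P_nf\|_{H^1(\Gamma)}$, and then compares these boundary norms with $\|P_nf\|_{L^2(K_i)}$ by layer-potential representation and elliptic regularity. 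The constants $C$ and the slight enlargement/shrinkage of the domain are harmless because $\lim_j(j!\,s_j)^{1/j}$ ignores bounded multiplicative factors and $\kap(K_i)\to\kap(K)$. The hypothesis $|\gamma|>C_0$ enters only to invert the boundary operators $B+(\gamma\pm\tfrac12)I$ (Robin-to-Dirichlet reduction), not to make anything lower order. As written, your plan would require a genuine asymptotic expansion of the effective operator at factorial precision, which is both much harder than and unnecessary for \eqref{eq-thm-2}; replacing it by the two-sided Toeplitz comparison is the missing idea.
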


The geometric constant $C_0$ depends only  on the domain $\Omega$
and will be introduced in the proof of Lemma~\ref{lem:RobDirch}
below. The restriction on $\gamma$ being large is technical and its
purpose is to invert some auxiliary boundary operators
(see Lemma~\ref{lem:RobDirch}). We mention also that Theorem~\ref{thm-KP}
was obtained for the Neumann case ($\gamma\equiv0$) by the second
author in \cite{P}.

As immediate corollary of Theorem~\ref{thm-KP}, we get:

\begin{cor}\label{thm-KP-N}
With the notation of Theorem~\ref{thm-KP}, for any function
$\gamma\in L^\infty(\partial\Omega)$, the following asymptotic limit
holds,
\begin{equation}\label{eq-thm-3}
\lim_{j\to\infty}
\left(j!(\Lambda_1-\ell_j^{(1)})\right)^{1/j}=\frac{b}2\big{(}\rm{Cap}(K)\big{)}^2\,.
\end{equation}\end{cor}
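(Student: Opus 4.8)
The plan is to deduce the corollary from Theorem~\ref{thm-KP} by a domain-monotonicity (min-max) argument that removes the largeness hypothesis on $\gamma$ \emph{when one restricts attention to the first cluster}, i.e.\ to eigenvalues below the first Landau level $\Lambda_1=b$. The point is that below $\Lambda_1$ the relevant quadratic form is bounded below by $0$ after subtracting a compact/bounded perturbation, and sandwiching a general $\gamma$ between two large constants will trap the eigenvalue sequences.

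First I would fix an arbitrary $\gamma\in L^\infty(\partial\Omega)$ and choose two constants $\gamma_- \le \gamma \le \gamma_+$ on $\partial\Omega$ with $|\gamma_\pm| > C_0(\Omega)$ (we may even take $\gamma_\pm$ with the same sign, enlarging $C_0$ if needed, so that Theorem~\ref{thm-KP} applies to both $L_{\Omega,b}^{\gamma_-}$ and $L_{\Omega,b}^{\gamma_+}$). Since the quadratic forms satisfy
\begin{equation*}
q_{\Omega,b}^{\gamma_-}(u)\le q_{\Omega,b}^{\gamma}(u)\le q_{\Omega,b}^{\gamma_+}(u)
\qquad\text{for all }u\in H^1_{\Ab_0}(\Omega),
\end{equation*}
with a common form domain, the min-max principle gives, for the ordered eigenvalues below $\Lambda_1$ (counted with multiplicity), the interlacing
\begin{equation*}
\ell_j^{(1)}(\gamma_-)\ \le\ \ell_j^{(1)}(\gamma)\ \le\ \ell_j^{(1)}(\gamma_+),\qquad j\in\mathbb N .
\end{equation*}
Here one must first check that the essential spectrum statement of Theorem~\ref{thm-KP} — namely $\sigma_{\mathrm{ess}}=\{\Lambda_n\}$ and finiteness of the spectrum in $(\Lambda_1,\Lambda_1+\varepsilon)$ — holds for \emph{all} $\gamma\in L^\infty$; this part of the theorem uses only that the boundary term is a relatively form-compact perturbation of the Landau Hamiltonian on $\Omega$, which does not need $\gamma$ large. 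Hence for general $\gamma$ the eigenvalues $\ell_j^{(1)}(\gamma)$ in $(-\infty,\Lambda_1)$ still form a sequence accumulating only at $\Lambda_1$, so the interlacing above is between three such sequences.

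Next I would translate the interlacing into the asymptotic quantity of interest. Write $d_j(\gamma)=\Lambda_1-\ell_j^{(1)}(\gamma)>0$; the interlacing becomes $d_j(\gamma_+)\le d_j(\gamma)\le d_j(\gamma_-)$. Applying Theorem~\ref{thm-KP} to $\gamma_\pm$ yields
\begin{equation*}
\lim_{j\to\infty}\bigl(j!\,d_j(\gamma_\pm)\bigr)^{1/j}=\frac{b}{2}\bigl(\mathrm{Cap}(K)\bigr)^2,
\end{equation*}
and since $x\mapsto(j!\,x)^{1/j}$ is increasing, the squeeze theorem immediately gives
\begin{equation*}
\lim_{j\to\infty}\bigl(j!\,d_j(\gamma)\bigr)^{1/j}=\frac{b}{2}\bigl(\mathrm{Cap}(K)\bigr)^2,
\end{equation*}
which is \eqref{eq-thm-3}.

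The main obstacle — really the only nontrivial point — is establishing that the first half of Theorem~\ref{thm-KP} (description of $\sigma_{\mathrm{ess}}$, local finiteness of the discrete spectrum near $\Lambda_1$, and the structure of $\{\ell_j^{(1)}\}$ as an infinite sequence accumulating to $\Lambda_1$ from below) is valid without the largeness assumption on $\gamma$, so that the min-max comparison is comparing like with like. I expect this to follow from inspecting the proof of Theorem~\ref{thm-KP}: the largeness of $\gamma$ is invoked (per the remark after the theorem and Lemma~\ref{lem:RobDirch}) only to invert auxiliary boundary operators in the \emph{sharp asymptotics} step, not in the qualitative spectral analysis, where a Persson-type argument and relative form-compactness of the boundary perturbation suffice for any bounded $\gamma$. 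Once that is in hand, the corollary is a one-line consequence of monotonicity of quadratic forms and the squeeze theorem; no new estimates on $\mathrm{Cap}(K)$ or on the eigenfunctions are needed.
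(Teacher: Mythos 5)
Your proposal is correct and is essentially the paper's own argument: sandwich $\gamma$ between two constants $\gamma_-<\gamma<\gamma_+$ with $|\gamma_\pm|>C_0$, use min--max monotonicity of the quadratic form in $\gamma$ to interlace the sequences $\ell_j^{(1)}$, and squeeze using the fact that the limit in Theorem~\ref{thm-KP} is independent of the boundary function. One inessential slip: your parenthetical claim that $\gamma_\pm$ may be taken with the same sign is false when $\gamma$ changes sign or is small (the paper takes $\gamma_1<0<\gamma_2$), but since Theorem~\ref{thm-KP} only requires $|\gamma_\pm|>C_0$ this does not affect the argument.
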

\begin{proof}
Since the function $\gamma\in L^\infty(\partial\Omega)$, then there
exist $\gamma_1<0$ and $\gamma_2>0$ such that
$$|\gamma_1|>C_0,\quad|\gamma_2|>C_0,
\quad{\rm and}~\gamma_1<\gamma(x)<\gamma_2\quad\forall~x\in\partial\Omega\,.$$
Here $C_0=C_0(\Omega)$ is the geometric constant from
Theorem~\ref{thm-KP}.

The variational min-max principle gives that the eigenvalues below
the bottom of the essential spectrum are monotone with respect to
$\gamma$, hence we obtain,
$$\ell^{(1)}_j(\gamma_1)\leq \ell_j^{(1)}(\gamma)\leq
\ell_j^{(1)}(\gamma_2)\,,\quad\forall~j\in\mathbb N\,.$$ Here, for a
boundary function $\eta\in L^\infty(\partial\Omega)$,
$\{\ell_j^{(1)}(\eta)\}_j$ denotes the increasing sequence of
eigenvalues of the operator $L_{\Omega,b}^\eta$ in the interval
$(-\infty,\Lambda_1)$.
Invoking then the asymptotic limit
\eqref{eq-thm-2} and noticing that it is independent from the
boundary condition, we get the asymptotic limit announced in
Corollary~\ref{thm-KP-N}.
\end{proof}

Roughly speaking, similar to \cite{puro}, the idea of the proof of
Theorem~\ref{thm-KP} is to work with the resolvent of
$L_{\Omega,b}^\gamma$, which can be viewed as a compact perturbation of the
resolvent of the Landau Hamiltonian in $\R^2$. To get the asymptotic
accumulation of the eigenvalues, we carry out a reduction to  a
boundary pseudo-differential operator, whose spectrum can be
compared with that of Toeplitz operators.

The paper is organized as follows. In Section~\ref{Sec-prel} we
collect various auxiliary and technical results that will be useful
in the proof. In Section~\ref{Sec-proof}, we give the proof of
Theorem~\ref{thm-KP}.

\section{Preliminaries}\label{Sec-prel}

\subsection{Two abstract results}
In this section we state two abstract results in operator theory
that will be useful in the paper.

\begin{lemma}\label{lem-PR}(Pushnitski-Rozenblum \cite[Proposition~2.1]{puro}).
Assume that $A$ and $B$ are two self-adjoint positive operators in
$L^2(\R^2)$ satisfying the following hypotheses:
\begin{itemize}
\item $0\not\in \sigma(A)\cup\sigma(B)$.
\item The form domain of $A$ contains that of $B$, i.e.
$D(B^{1/2})\subset D(A^{1/2})$.
\item For all $f\in D(B^{1/2})$, $\|A^{1/2}f\|=\|B^{1/2}f\|$, i.e.
the quadratic forms of $A$ and $B$ agree on the form
domain of $B$. \end{itemize} Then, $B^{-1}\leq A^{-1}$ in the quadratic form
sense, i.e.
\begin{displaymath}
\langle B^{-1}f,f\rangle\leq \langle A^{-1}f,f\rangle\quad\forall~f\in L^2(\R^2).
\end{displaymath}
\end{lemma}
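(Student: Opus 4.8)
The plan is to prove the operator inequality $B^{-1}\leq A^{-1}$ in the quadratic form sense by reducing it to a statement about the operator $T = B^{1/2}A^{-1/2}$ (suitably interpreted) being a contraction. First I would observe that the inclusion $D(B^{1/2})\subset D(A^{1/2})$ together with the equality $\|A^{1/2}f\|=\|B^{1/2}f\|$ for $f\in D(B^{1/2})$ says that $A^{1/2}$ is, on the form domain of $B$, controlled by $B^{1/2}$; combined with the closedness of the forms this should yield that the map $B^{1/2}f\mapsto A^{1/2}f$ extends to a bounded operator on $L^2(\R^2)$ (in fact an isometry onto its range $\overline{\im A^{1/2}}$, which is all of $L^2$ since $0\notin\sigma(A)$ makes $A^{1/2}$ invertible with bounded inverse). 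The key consequence I want to extract is that for all $g$ in the range of $B^{1/2}$, writing $f = B^{-1/2}g$, we have $\|A^{1/2}B^{-1/2}g\| = \|g\|$, i.e. $A^{1/2}B^{-1}A^{1/2}$ has norm controlled appropriately on a dense set.

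Next I would run the standard duality argument. Since $0\notin\sigma(B)$, $B^{-1/2}$ is bounded and self-adjoint, so $D(B^{1/2})$ equals the range of $B^{-1/2}$ and every $f\in L^2$ is of the form $f = B^{-1/2}h$ for a unique $h\in L^2$. For such $f$, $\langle B^{-1}f,f\rangle = \langle B^{-1/2}f, B^{-1/2}f\rangle = \|B^{-1}h\|^2$ — wait, more cleanly: I would instead test the desired inequality on vectors of the form $f = A^{1/2}\phi$. Since $A^{1/2}$ is boundedly invertible, such $f$ range over all of $L^2$. Then $\langle A^{-1}f,f\rangle = \|\phi\|^2$. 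On the other hand $\langle B^{-1}f,f\rangle = \|B^{-1/2}A^{1/2}\phi\|^2$, and I claim $B^{-1/2}A^{1/2}$ extends to a contraction; granting that, $\|B^{-1/2}A^{1/2}\phi\|\leq \|\phi\|$ and the inequality follows. To see $B^{-1/2}A^{1/2}$ is a contraction, note its adjoint (on the appropriate domain) is the closure of $A^{1/2}B^{-1/2}$, and for $g\in L^2$ we have $B^{-1/2}g\in D(B^{1/2})\subset D(A^{1/2})$, so $\|A^{1/2}B^{-1/2}g\| = \|B^{1/2}B^{-1/2}g\| = \|g\|$ by the form equality; hence $A^{1/2}B^{-1/2}$ is an isometry, so its adjoint is a contraction, which is what we needed.

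I expect the main obstacle to be handling the domains carefully — in particular justifying that the quadratic-form hypotheses, which are stated only for $f$ in the form domain $D(B^{1/2})$, transfer to the closed operator $A^{1/2}B^{-1/2}$ defined on all of $L^2$. The point is that $B^{-1/2}$ maps $L^2$ boundedly \emph{into} $D(B^{1/2}) = \dom(B^{1/2})$ (with $B^{1/2}B^{-1/2} = I$), and this is exactly the subspace on which $A^{1/2}$ is defined and satisfies the norm identity, so the composition $A^{1/2}B^{-1/2}$ is genuinely everywhere-defined on $L^2$ and the identity $\|A^{1/2}B^{-1/2}g\| = \|g\|$ holds literally for every $g$. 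Once this is pinned down, the rest is the soft functional-analytic manipulation above: an isometry has a contractive adjoint, and the contraction property of $B^{-1/2}A^{1/2}$ translates directly into the claimed form inequality $\langle B^{-1}f,f\rangle \leq \langle A^{-1}f,f\rangle$ after the substitution $f = A^{1/2}\phi$ with $\phi$ ranging over $L^2$. No essential use of the magnetic structure or the two-dimensionality is needed; the lemma is a purely abstract fact about a pair of forms, one extending the other with matching values.
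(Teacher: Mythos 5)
Your argument is correct, and it is worth noting that the paper itself offers no proof of this lemma --- it is quoted verbatim from Pushnitski--Rozenblum \cite[Proposition~2.1]{puro} --- so there is nothing internal to compare against. Your route (show $A^{1/2}B^{-1/2}$ is an everywhere-defined isometry, pass to the adjoint to get that $B^{-1/2}A^{1/2}$ is a contraction on $D(A^{1/2})$, then substitute $f=A^{1/2}\phi$, which exhausts $L^2$ because $0\notin\sigma(A)$) is sound; the only step stated loosely is the identification of the adjoint, and the clean version is that for $\phi\in D(A^{1/2})$ one has $\langle A^{1/2}B^{-1/2}g,\phi\rangle=\langle g,B^{-1/2}A^{1/2}\phi\rangle$, so $B^{-1/2}A^{1/2}$ is the restriction of the bounded adjoint of an isometry and hence contractive where you need it. For comparison, the proof in \cite{puro} is shorter and avoids adjoints altogether: it uses the variational identity
\begin{displaymath}
\langle A^{-1}f,f\rangle=\sup_{0\neq u\in D(A^{1/2})}\frac{|\langle f,u\rangle|^2}{\|A^{1/2}u\|^2}\,,
\end{displaymath}
(proved by the substitution $u=A^{-1/2}v$), together with the analogous identity for $B$; since the supremum for $B$ runs over the smaller set $D(B^{1/2})\subset D(A^{1/2})$ on which the two quadratic forms coincide, the inequality $\langle B^{-1}f,f\rangle\leq\langle A^{-1}f,f\rangle$ is immediate. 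Both arguments are purely abstract, as you correctly observe.
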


The second abstract result we state is Theorem~9.4.7 from
\cite{biso}.

\begin{lemma}\label{lem-biso}
Assume $A$ is a  self-adjoint operator  and $V$ a compact  and
positive operator
in $L^2(\R^2)$ such that the spectrum of $A$   in an interval
$(\alpha,\beta)$ is discrete and does not accumulate at $\beta$.
Then the spectrum of the operator $B=A+V$ in $(\alpha,\beta)$ is
discrete and does not accumulate at $\beta$.
\end{lemma}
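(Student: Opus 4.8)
The plan is to reduce the claim to the Birman--Schwinger principle together with a standard monotonicity argument for the counting function of eigenvalues in a gap. First I would recall that, since $A$ is self-adjoint and its spectrum in $(\alpha,\beta)$ is discrete and does not accumulate at $\beta$, for every $\lambda\in(\alpha,\beta)$ the counting function
\[
N(\lambda;A)=\#\{\sigma(A)\cap(\alpha,\lambda]\}
\]
(eigenvalues counted with multiplicity) is finite. I want to show the same for $B=A+V$. Fix $\lambda\in(\alpha,\beta)$ not in $\sigma(A)$ (the exceptional points form a discrete set, so it suffices to treat such $\lambda$). Then $A-\lambda$ is boundedly invertible near $\lambda$ on the spectral subspace complementary to the finitely many eigenvalues in $(\alpha,\lambda]$, and one can write, in the usual way, the Birman--Schwinger relation: an eigenvalue of $B$ crosses $\lambda$ precisely when $V^{1/2}(A-\lambda)^{-1}V^{1/2}$ has eigenvalue $-1$. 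The key point is that $V$ is compact and positive, so $V^{1/2}(A-\lambda)^{-1}V^{1/2}$ is compact and self-adjoint, hence its spectrum is discrete away from $0$ with finite multiplicities.

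Next I would turn this into a quantitative bound. For $\lambda<\beta$, write $A-\lambda=(A-\lambda)_+ - (A-\lambda)_-$ via the spectral projections $P_\pm$ onto the positive/negative parts; the negative part lives on the finite-dimensional space $\operatorname{Ran}P_-=\operatorname{Ran}\mathbf 1_{(\alpha,\lambda]}(A)\oplus(\text{part below }\alpha)$. Actually the cleaner route is the classical gap-counting estimate: for $V\ge 0$ compact,
\[
N(\lambda;A+V)\le N(\lambda;A)+n_+\bigl(1;V^{1/2}(A-\lambda)^{-1}_+V^{1/2}\bigr),
\]
where $n_+(s;T)$ counts eigenvalues of $T$ exceeding $s$, and $(A-\lambda)^{-1}_+$ is the inverse of the positive part. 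Since $V$ is compact and $(A-\lambda)^{-1}_+$ is bounded, the operator $V^{1/2}(A-\lambda)^{-1}_+V^{1/2}$ is compact and positive, so $n_+(1;\cdot)$ is finite. Combined with the finiteness of $N(\lambda;A)$, this shows $N(\lambda;B)<\infty$ for each $\lambda\in(\alpha,\beta)$, which is exactly the statement that $\sigma(B)\cap(\alpha,\beta)$ is discrete and does not accumulate at $\beta$.

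I expect the main obstacle to be bookkeeping rather than a deep idea: one must be careful that the pieces of $\sigma(A)$ below $\alpha$ (which may be continuous spectrum) do not interfere, and that the Birman--Schwinger operator is formed with the correct sign conventions so that only the positive part of $A-\lambda$ — the part that is bounded below away from $0$ uniformly for $\lambda$ bounded away from $\beta$ — enters the compactness argument. Since this lemma is quoted verbatim from \cite[Theorem~9.4.7]{biso}, I would in fact simply cite it; the sketch above indicates why it holds and is the argument one would reconstruct if a self-contained proof were wanted.
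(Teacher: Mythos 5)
The paper offers no proof of this lemma at all --- it is quoted verbatim as Theorem~9.4.7 of Birman--Solomjak --- so your decision to cite that reference is exactly what the authors do. The supplementary sketch, however, has its counting function pointing the wrong way, and the final deduction as written is invalid. You set $N(\lambda;\cdot)=\#\{\sigma(\cdot)\cap(\alpha,\lambda]\}$ and conclude that finiteness of $N(\lambda;B)$ for every $\lambda\in(\alpha,\beta)$ ``is exactly the statement that $\sigma(B)\cap(\alpha,\beta)$ is discrete and does not accumulate at $\beta$.'' It is not: the points $\beta-1/j$ accumulate at $\beta$ while meeting $(\alpha,\lambda]$ in only finitely many points for each fixed $\lambda<\beta$. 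Moreover the premise $N(\lambda;A)<\infty$ is not guaranteed by the hypotheses, which forbid accumulation only at $\beta$; the spectrum of $A$ may accumulate at $\alpha$ from above, and there may be essential spectrum at and below $\alpha$. Note also that $\beta$ itself may lie in $\sigma_{\mathrm{ess}}(A)=\sigma_{\mathrm{ess}}(B)$ --- this is precisely the situation in the application, where $\beta=\Lambda_n^{-1}$ --- so one can neither invert $A-\beta$ nor invoke min--max below the bottom of the essential spectrum.

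The repair is the mirror image of your inequality. The quantity to control is $\#\{\sigma(B)\cap[\lambda,\beta)\}$ for $\lambda$ near $\beta$. Choose $\lambda_0<\beta$ with $\sigma(A)\cap(\lambda_0,\beta)=\varnothing$ (possible by hypothesis) and fix $\lambda\in(\lambda_0,\beta)$. Since $V\ge 0$, the eigenvalue branches of $A+tV$ in the gap are nondecreasing in $t$ and nothing descends into $(\alpha,\beta)$ from $[\beta,\infty)$; hence every eigenvalue of $B$ in $[\lambda,\beta)$ comes from a branch that crosses the level $\lambda$ for some $t\in(0,1]$, and the Birman--Schwinger principle in the gap bounds the number of such crossings by $n_+\bigl(1;V^{1/2}(\lambda-A)^{-1}V^{1/2}\bigr)$, which is finite because $V^{1/2}$ is compact and $(\lambda-A)^{-1}$ is bounded. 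Discreteness in the interior of $(\alpha,\beta)$ is simply Weyl's theorem. You correctly identified the bookkeeping as the main obstacle, but the bookkeeping is where the sketch fails; with the counting reoriented as above it becomes the standard proof, and of course the citation alone suffices for the purposes of the paper.
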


\subsection{Some facts about the Landau Hamiltonian}\label{sec:landau}
In this section we review classical results concerning the Landau
Hamiltonian
\begin{equation}\label{eq-LH}
L=-(\nabla-ib\Ab_0)^2\quad\text{in }\mathbb{R}^{2}\,.
\end{equation}
Here $\Ab_0$ is the magnetic
potential with unit constant magnetic field introduced in
\eqref{mp-A0}, and $B$ is a positive constant. The form domain of
$L$ is the magnetic Sobolev space
\begin{displaymath}
H^1_{\Ab_0}(\R^2)=\{u\in L^2(\R^2)~:~(\nabla-ib\Ab_0)u\in
L^2(\R^2)\}\,.
\end{displaymath}
The spectrum of $L$ consists of infinitely degenerate eigenvalues
called Landau levels,
\begin{displaymath}
\sigma(L)=\{\Lambda_n~:~n\in\mathbb
N\}\,,\quad\Lambda_n=(2n-1)b\quad\forall~n\in\mathbb N\,.
\end{displaymath}
We denote by $\mathcal L_n$ the eigenspace associated with the
Landau level $\Lambda_n$, i.e.
\begin{equation}\label{eq-Ln}
\mathcal L_n={\rm Ker}(L-\Lambda_n)\quad\forall~n\in\mathbb
N\,.
\end{equation}
We also denote  by $P_n$ the orthogonal projection on the eigenspace
$\mathcal L_n$.

The operator $L$ can also be expressed by the creation and annihilation
operators $\overline{Q}$ and $Q$. We introduce complex notation $z=x_1+ix_2$ and
let $\Psi=\frac{b}{4}|z|^2$ be a scalar potential, $\Delta\Psi=b$. Then, with
\begin{equation*}
\overline{Q}=-2i e^{-\Psi}\frac{\partial}{\partial z}e^\Psi,\quad
Q = -2i e^\Psi\frac{\partial}{\partial \bar{z}} e^{-\Psi}
\end{equation*}
the following well known identities hold:
\begin{equation}\label{eq-creann}
\overline{Q}=Q^*,\quad
[Q,\overline{Q}]=2b,\quad
L = \overline{Q}Q+b.
\end{equation}
We also notice that we can define $R_0=L^{-1}$, the resolvent of $
L$. This is a bounded operator $R_0\in \mathcal L(L^2(\R^2))$ with
image in $D(L)$. Furthermore, $R_0$ is an operator with an integral
kernel that we denote by $G_0(x,y)$. The following formula for $G_0(x,y)$ is
given in \cite{avhesi} (here $x\wedge y=x_1y_2-x_2y_1$):
\begin{equation}\label{eq-kernel}
G_0(x,y)=\int_0^\infty \frac{b}{4\pi\sinh(bs)}\exp\Bigl(\frac{ib}{2}x\wedge y
-\frac{b}{4\tanh(bs)}|x-y|^2 \Bigr)\ed s.
\end{equation}
.
\begin{lemma}
\label{lem:kernel}
 $R_0$ is an integral operator with kernel $G_0(x,y)$ that has the
 following singularity at the diagonal,
\begin{equation}
 \label{eq:as}
G_0(x,y) \sim \frac{1}{2\pi}
\ln\left(\frac1{|x-y|}\right)+O(1)\quad \text{as } |x-y|\to 0,
\end{equation}
and the corresponding behavior holds for $\partial_N G_0(x,y)$.
Moreover, $G_0(x,y)$ decays as a Gaussian as $|x-y|\to\infty$.
\end{lemma}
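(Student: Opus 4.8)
The plan is to read off all three assertions directly from the explicit representation \eqref{eq-kernel}; that $R_0$ is the integral operator with this kernel, and the formula itself, are taken from \cite{avhesi}, so the whole argument is an elementary analysis of the $s$-integral. Write $r=|x-y|$. First observe that the phase factor is harmless near the diagonal: since $x\wedge y=x\wedge(y-x)$ one has $|x\wedge y|\le|x|\,r$, so on a fixed compact set $\exp\bigl(\tfrac{ib}{2}x\wedge y\bigr)=1+O(r)$, and multiplying an at most logarithmic modulus by $1+O(r)$ alters it only by an $O(1)$ term because $r\ln(1/r)\to0$. Hence it suffices to analyse $J(r):=\int_0^\infty\frac{b}{4\pi\sinh(bs)}\exp\bigl(-\frac{b}{4\tanh(bs)}r^2\bigr)\ed s$.

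For the diagonal singularity I would split $J(r)=\int_0^1+\int_1^\infty$. The tail is bounded uniformly in $r$ since $\sinh(bs)^{-1}$ decays exponentially while the exponential factor is $\le1$, so it is $O(1)$. On $(0,1]$ use the expansions $\frac{b}{4\sinh(bs)}=\frac1{4s}+h(s)$ and $\frac{b}{4\tanh(bs)}=\frac1{4s}+g(s)$, where $h$ is bounded and $g$ is bounded and nonnegative on $[0,1]$ (the latter because $\tanh(bs)\le bs$). The $h$-contribution is $O(1)$; and since $0\le1-e^{-g(s)r^2}\le g(s)r^2$, replacing $e^{-g(s)r^2}$ by $1$ costs at most $Cr^2\int_0^1 s^{-1}e^{-r^2/(4s)}\ed s=O\bigl(r^2\ln(1/r)\bigr)=o(1)$. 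What remains is $\int_0^1\frac1{4\pi s}e^{-r^2/(4s)}\ed s$, and the substitution $t=r^2/(4s)$ turns it into $\frac1{4\pi}\int_{r^2/4}^{\infty}t^{-1}e^{-t}\ed t=\frac1{4\pi}E_1(r^2/4)$, where $E_1(x)=\int_x^\infty t^{-1}e^{-t}\ed t$ is the exponential integral; the classical expansion $E_1(x)=\ln(1/x)+O(1)$ as $x\to0$ gives precisely $\frac1{2\pi}\ln(1/r)+O(1)$. Collecting the pieces yields \eqref{eq:as}.

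For $\partial_N G_0$ I would differentiate \eqref{eq-kernel} under the integral sign — legitimate by the Gaussian factor together with the local-in-$s$ integrability of the differentiated integrand — which produces the same integrand multiplied by a factor of the form $\beta(y)-\frac{b}{2\tanh(bs)}\,(x-y)\cdot N$, where $\beta$ is bounded (it comes from differentiating the phase $\tfrac{ib}{2}x\wedge y$, which is affine in $y$). Rerunning the split-at-$s=1$ analysis, the contribution of the bounded summand $\beta(y)$ is again at most logarithmic. For the second summand one invokes the smoothness of $\partial\Omega$: for $x,y\in\partial\Omega$ one has $(x-y)\cdot\nu_x=O(r^2)$, and this $O(r^2)$ smallness precisely compensates the $s^{-2}$-type blow-up of the remaining integrand at $s=0$, leaving an $O(1)$ contribution. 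Thus $\partial_N G_0(x,y)$ has, on $\partial\Omega$, at most a logarithmic singularity at the diagonal, which is what ``the corresponding behavior'' asks for.

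Finally, for the decay as $r\to\infty$ I would use the two bounds $\tanh(bs)\le1$ and $\tanh(bs)\le bs$ simultaneously, giving $\frac{b}{4\tanh(bs)}r^2\ge\frac b8 r^2+\frac{r^2}{8s}$ and hence $|G_0(x,y)|\le e^{-br^2/8}\int_0^\infty\frac{b}{4\pi\sinh(bs)}e^{-r^2/(8s)}\ed s$; the remaining integral is finite for every $r>0$ and grows at most logarithmically in $r$ (its only issue, at $s\to0^+$, is $\tfrac1{4\pi}E_1(r^2/8)$, which is $O(1)$ for $r\ge1$), so $|G_0(x,y)|\le Ce^{-cr^2}$ for $|x-y|\ge1$, i.e.\ Gaussian decay. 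The step I expect to be the real obstacle is the one for $\partial_N G_0$: one must differentiate the representation carefully and recognise that the only potentially supra-logarithmic term is tamed precisely by the smoothness of $\partial\Omega$ through $(x-y)\cdot\nu_x=O(|x-y|^2)$ — the same mechanism behind the continuity of the double-layer kernel of a $C^2$ curve; everything else is routine manipulation of the $s$-integral together with the asymptotics of the exponential integral.
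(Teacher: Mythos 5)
Your argument is correct, but it is genuinely different from the paper's. The paper disposes of the lemma in two lines: it rewrites the $s$-integral \eqref{eq-kernel} in closed form as a combination of Whittaker functions $W_{\frac12,-\frac12}$ and $W_{-\frac12,-\frac12}$ (citing Bateman--Erd\'elyi) and then simply invokes the known asymptotics of those special functions at $0$ and at $\infty$. You instead work directly on the integral: you factor out the ($s$-independent) phase, split at $s=1$, replace $\frac{b}{4\sinh(bs)}$ and $\frac{b}{4\tanh(bs)}$ by $\frac1{4s}$ up to controlled errors, and reduce the singular part to the exponential integral $E_1(r^2/4)$, whose expansion $\ln(1/x)+O(1)$ gives \eqref{eq:as}; your two-sided bound on $\tanh$ gives the Gaussian decay. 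This is more elementary and self-contained, at the cost of being longer; it also has the merit of actually proving the assertion about $\partial_N G_0$, which the paper leaves entirely to the phrase ``the corresponding behavior holds'' --- your identification of the double-layer mechanism $(x-y)\cdot\nu=O(|x-y|^2)$ as the reason the normal derivative stays logarithmic \emph{on the boundary} is exactly what Remark~\ref{rem:kernel} (Hilbert--Schmidt property of $B$) requires, and is not spelled out anywhere in the paper. One small caveat applies equally to both proofs: the prefactor $\exp(\frac{ib}{2}x\wedge y)$ is only close to $1$ for $x$ in a bounded set, so the asymptotics \eqref{eq:as} as literally written should be understood either for the modulus $|G_0|$ or for $x,y$ confined to a compact set (which is all that is used, since the kernel is only ever restricted to $\Gamma$).
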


\begin{proof}
This integral can be expressed in terms of
the Whittaker function
(see~\cite[Section~4.9, formula~(31)]{baer1} and~\cite[Chapter~6]{baer2}) as
\begin{multline*}
G_0(x,y)=\frac{\pi^{3/2}}{b}\Bigl(\frac{b}{8}|x-y|^2\Bigr)^{-3/4}
\exp\Bigl(\frac{ib}{2}x\wedge y\Bigr)\\
\times\Bigl[W_{\frac12,-\frac12}\Bigl(\frac{b}{2}|x-y|^2\Bigr)
+\frac12W_{-\frac12,-\frac12}\Bigl(\frac{b}{2}|x-y|^2\Bigr)\Bigr].
\end{multline*}
The result follows from asymptotic formulae for Whittaker
functions~\cite{baer2}.
\end{proof}

\subsection{Some boundary operators}
Recall that $K\subset\R^2$ has been assumed to be a compact simply
connected subset of $\R^2$ and that we defined $\Omega=\R^2\setminus
K$. Since $\Omega$ and $K$ are complementary, the Hilbert space
$L^2(\R^2)$ is decomposed as the direct sum $L^2(\Omega)\oplus
L^2(K)$ in the sense that any function $u\in L^2(\R^2)$ can be
represented as $u_\Omega\oplus u_K$ where $u_\Omega$ and $u_K$ are
the restrictions of $u$ to $\Omega$ and $K$ respectively.

Denoting by $\Gamma$ the common boundary of $\Omega$ and $K$, we
define the following operator on $\Gamma$,
\begin{equation}\label{eq-normalder}
\partial_\Gamma u=\partial_Nu+\gamma\,u=\nu_\Omega\cdot(\nabla-ib\Ab_0)u+\gamma\,u\,,
\end{equation}
where $\nu_\Omega$ is the unit {\it outward} normal vector to the
boundary of $\Omega$ and $\Ab_0$ is the magnetic potential from
\eqref{mp-A0}. The operator $\partial_\Gamma$ acts on functions in
$H^1_{\text{loc}}(\Omega)$ or in $H^1(K)$. We may write
$(\partial_\Gamma)_x$ in order to stress that the differentiation in
\eqref{eq-normalder} is with respect to the variable $x$.

With $G_0(x,y)$ as in~\eqref{eq-kernel} we define the operators $\mathcal{A}$,
$\mathcal{B}$, $A$ and $B$ as
\begin{equation}\label{eq-boundaryop}
\begin{aligned}
\mathcal{A}\alpha(x) &= \int_\Gamma
G_0(x,y)\alpha(y)\ed S(y),\quad x\in\R^2\\
\mathcal{B}\alpha(x) &= \int_\Gamma
({\partial_N})_y G_0(x,y)\alpha(y)\ed S(y),\quad x\in\R^2\setminus\Gamma,\\
A\alpha(x) &= \int_\Gamma
G_0(x,y)\alpha(y)\ed S(y),\quad x\in\Gamma\\
B\alpha(x) &= \int_\Gamma
({\partial_N})_y G_0(x,y)\alpha(y)\ed S(y),\quad x\in\Gamma.
\end{aligned}
\end{equation}

\begin{remark}\label{rem:kernel}
The operators $A$ and $B$ in~\eqref{eq-boundaryop} are well-defined
bounded operators in $L^2(\Gamma)$. This is due to the behavior of
the integral kernel $G_0$ from Lemma~\ref{lem:kernel}. Actually, for
a fixed $x\in\R^2$, the function $G_0(x-\cdot)\in L^2(\Gamma)$.

Moreover, since $G_0$ and $\partial_N G_0$ are in
$L^2(\Gamma\times\Gamma)$, we see that the operators $A$ and $B$
are Hilbert-Schmidt, hence compact in $L^2(\Gamma)$.\hfill$\Box$
\end{remark}

\begin{lemma}\label{lem-green}
Let $u\in L^2(\R^2)$ be such that $u_\Omega\in H^1_{\text{loc}}(\Omega)$
and $u_K\in H^1(K)$. Then it holds that
\begin{align*}
\left(B+\left(\gamma+\frac12\right)I\right)u_\Omega&=A(\partial_\Gamma
u_\Omega)\quad\text{and}\\
\left(B+\left(\gamma-\frac12\right)I\right)u_K&=A(\partial_\Gamma
u_K).
\end{align*}
\end{lemma}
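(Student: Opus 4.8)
Both identities are instances of Green's representation formula for the magnetic Laplacian $L$, with $G_0$ playing the role of the free Green's function; the plan is to apply the magnetic analogue of Green's second identity on $\Omega$ and on $K$ separately and then let the source point approach the common boundary $\Gamma$, controlling the limit by means of the jump relations of the single- and double-layer potentials $\mathcal A$, $\mathcal B$.

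I would begin by recording the properties of $G_0$ that make this work. Since $R_0=L^{-1}$ has integral kernel $G_0$, one has $L_xG_0(x,y)=\delta(x-y)$, so $G_0(x,\cdot)$ is a fundamental solution; moreover $G_0(x,y)=\overline{G_0(y,x)}$ (the phase $x\wedge y$ in \eqref{eq-kernel} being antisymmetric), which is exactly what allows the same kernel to occupy the second slot of Green's second identity. By Lemma~\ref{lem:kernel}, $G_0$ and $\partial_NG_0$ decay like Gaussians at infinity, so $G_0(x,\cdot)$ is an admissible test function on the unbounded domain $\Omega$ as well as on the bounded domain $K$, the decay discarding the contribution of the circle at infinity.

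Next, taking the source point $x$ first in the open complement of the domain in question — where $G_0(x,\cdot)$ is smooth — I would apply Green's second identity to the pair $(u_K,G_0(x,\cdot))$ on $K$, and to $(u_\Omega,G_0(x,\cdot))$ on $\Omega$. This represents $u_K$, respectively $u_\Omega$, through the layer potentials of its Cauchy data $(u|_\Gamma,\partial_Nu)$, up to a volume potential of $Lu$ that is immaterial in the case at hand. Letting $x\to\Gamma$ I would then invoke the classical jump relations: $\mathcal A$ passes continuously to the boundary with trace $A$, whereas $\mathcal B$ jumps by $\mp\tfrac12I$ with principal-value part $B$, the sign determined by the side of approach because $\nu_\Omega$ is the outward normal of $\Omega$ and hence the inward normal of $K$. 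These are the standard relations for the Newtonian kernel, valid here since the leading singularity of $G_0$ is exactly $\tfrac1{2\pi}\ln\tfrac1{|x-y|}$ by Lemma~\ref{lem:kernel}; a density argument promotes them from smooth densities to the $H^{1/2}$ and $H^{-1/2}$ traces a mere $H^1_{\mathrm{loc}}$ function supplies. Collecting the boundary terms and writing $\partial_Nu=\partial_\Gamma u-\gamma u$ then brings the two relations into the stated form, the $+\tfrac12$ falling in the $\Omega$-identity and the $-\tfrac12$ in the $K$-identity precisely because of the opposite orientation of $\nu_\Omega$ relative to the two domains.

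The step I expect to be the main obstacle is the sign and conormal-derivative bookkeeping in the jump relations — in particular checking that the derivative $(\partial_N)_y$ acting on the second argument of $G_0(x,y)$ in the definition of $\mathcal B$ is exactly the one produced by Green's identity once the conjugation $G_0(x,y)=\overline{G_0(y,x)}$ is accounted for — together with verifying that all the boundary integrals converge and the traces are well defined at the low regularity assumed; here once more it is the decay of $G_0$ from Lemma~\ref{lem:kernel} that lets the exterior domain $\Omega$ be treated like a bounded one.
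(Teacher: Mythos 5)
Your route is, in substance, the paper's: the paper's entire proof consists of quoting the $\gamma\equiv0$ identities $(B+\tfrac12 I)u_\Omega=A(\partial_N u_\Omega)$ and $(B-\tfrac12 I)u_K=A(\partial_N u_K)$ from \cite[(4.6)--(4.7)]{P} and then substituting $\partial_N=\partial_\Gamma-\gamma I$, and the Green's-identity/jump-relation argument you outline (with the sign of the $\pm\tfrac12$ fixed by the side of approach, the symmetry $G_0(x,y)=\overline{G_0(y,x)}$, and the Gaussian decay of $G_0$ controlling the contribution at infinity) is exactly the standard derivation of those quoted identities. So you are re-proving the cited input rather than citing it, which is legitimate.

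Two points need attention, however. First, you declare the volume potential of $Lu$ ``immaterial in the case at hand'', but the lemma assumes only $u_\Omega\in H^1_{\rm loc}(\Omega)$ and $u_K\in H^1(K)$: $Lu$ need not vanish, nor even be defined as a function, so the Calder\'on-type boundary identities you obtain hold as stated only for functions annihilated by $L$ in the relevant domain, and otherwise carry an extra Newtonian-potential term $\int G_0(x,y)(Lu)(y)\,\mathrm{d}y$ that you must either retain or remove by an added hypothesis. The paper buries this point inside the citation to \cite{P}; a self-contained derivation has to address it explicitly. Second, the final substitution --- performed identically in the paper --- yields $A(\partial_\Gamma u_\Omega)=A(\partial_N u_\Omega)+A(\gamma u_\Omega)=(B+\tfrac12 I)u_\Omega+A(\gamma u_\Omega)$, where $A(\gamma u_\Omega)$ is the single-layer potential of $\gamma u_\Omega$ and not the multiplication term $\gamma u_\Omega$ appearing on the left of the stated identity; the ``stated form'' therefore does not literally follow, and the left-hand side produced by the argument is $B+\tfrac12 I+A\gamma$, with $A\gamma$ meaning $A$ composed with multiplication by $\gamma$. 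This discrepancy is inherited from the paper's own one-line proof rather than introduced by you, but your assertion that the substitution ``brings the two relations into the stated form'' should be checked rather than taken for granted.
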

\begin{proof}
It is proved in \cite[(4.6)-(4.7)]{P} that,
\begin{displaymath}
\left(B+\frac12I\right)u_\Omega=A(\partial_N
u_\Omega)\quad\text{and}\quad \left(B-\frac12I\right)u_K=A(\partial_N
u_K)\,.
\end{displaymath}
Inserting $\partial_N=\partial_\Gamma-\gamma I$ above, we get the
formulae that we wish to prove.
\end{proof}

\begin{lemma}\label{lem:pseudo}
The operators $A$ and $B$ are pseudo-differential operators of order $-1$.
Moreover, the operator $A$ is elliptic and
$A:L^2(\Gamma)\to H^1(\Gamma)$ is
an isomorphism.
\end{lemma}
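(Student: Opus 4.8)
The operators $A$ and $B$ depend only on the kernel $G_0$ and not on $\gamma$, so one is really proving a statement about the single-layer and (adjoint) double-layer operators attached to the elliptic operator $L$; the plan is to read off their pseudodifferential nature from the standard calculus of layer potentials on the smooth compact curve $\Gamma$ (this is carried out for the Neumann case in \cite{P}). Since $R_0=L^{-1}$, the kernel $G_0(\cdot,y)$ is the fundamental solution of $L=-(\nabla-ib\Ab_0)^2$, a second-order elliptic operator with principal symbol $|\xi|^2$, i.e.\ that of $-\Delta$; thus, refining Lemma~\ref{lem:kernel}, $G_0(x,y)$ has the standard diagonal expansion of such a fundamental solution, with leading term $\frac1{2\pi}\ln\frac1{|x-y|}$ (the Newtonian kernel of $\R^2$) and successively smoother remainders, the magnetic and zeroth-order parts of $L$ contributing only lower-order corrections. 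I would then invoke the classical fact that restricting such kernels, and their $\partial_N$'s, to $\Gamma\times\Gamma$ produces classical pseudodifferential operators of order $-1$ on $\Gamma$: for $A$ the principal symbol is a positive multiple of $|\xi|_g^{-1}$ ($\xi$ the cotangent variable, $g$ the metric induced on $\Gamma$), which is nonvanishing, so $A$ is elliptic; for $B$, the leading term $(\partial_N)_y\bigl(\tfrac1{2\pi}\ln\tfrac1{|x-y|}\bigr)$ extends boundedly across the diagonal because $(x-y)\cdot\nu_\Omega(y)$ vanishes to second order on the smooth curve, while the subleading terms are of order $\le-1$, so $B\in\Psi^{-1}(\Gamma)$ (no ellipticity of $B$ is needed).

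For the isomorphism I would argue in two steps. First, ellipticity of $A$ makes $A:H^s(\Gamma)\to H^{s+1}(\Gamma)$ Fredholm for every $s$, with $s$-independent index; since its principal symbol is a positive scalar multiple of that of $(1-\Delta_\Gamma)^{-1/2}$, the segment joining the two consists of elliptic order-$(-1)$ operators, so $\operatorname{ind}A=\operatorname{ind}(1-\Delta_\Gamma)^{-1/2}=0$. Second, $A$ is injective: given $\alpha\in L^2(\Gamma)$ with $A\alpha=0$, put $\mu=\alpha\,\ed S$, a distribution on $\R^2$ supported on $\Gamma$; by the trace theorem $\mu$ lies in the dual of the form domain of $L$, and $R_0\mu=\mathcal A\alpha$ restricts to $A\alpha$ on $\Gamma$. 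Writing $R_0=L^{-1}=(L^{-1/2})^*L^{-1/2}$ (legitimate since $L\ge b>0$) and extending the $L^2$-pairing to the dual pairing gives
\[
0=\langle A\alpha,\alpha\rangle_{L^2(\Gamma)}=\langle R_0\mu,\mu\rangle=\|L^{-1/2}\mu\|_{L^2(\R^2)}^2 ,
\]
so $L^{-1/2}\mu=0$, and since $L^{-1/2}$ is injective, $\mu=0$, i.e.\ $\alpha=0$. Being an injective index-$0$ Fredholm operator, $A:H^s(\Gamma)\to H^{s+1}(\Gamma)$ is then an isomorphism for every $s$, in particular $A:L^2(\Gamma)\to H^1(\Gamma)$. (Injectivity can alternatively be derived from Lemma~\ref{lem-green}: $u=\mathcal A\alpha$ solves $Lu=0$ in $\Omega$ and in $K$ with zero trace on $\Gamma$, and since the magnetic Dirichlet realizations of $L$ on $\Omega$ and on $K$ are bounded below by $b>0$ one gets $u\equiv0$, whence $\alpha=0$ from the jump of the normal derivative of the single-layer potential.)

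The main obstacle is the first step: turning ``$G_0$ has the leading singularity $\frac1{2\pi}\ln\frac1{|x-y|}$'' into a precise statement that $A$ and $B$ are classical pseudodifferential operators of order $-1$ with the stated principal symbol for $A$. This requires either a restriction theorem for the Boutet de Monvel / Seeley transmission calculus, or an explicit parametrix expansion of $L^{-1}$ near the diagonal together with a term-by-term check that applying $(\partial_N)_y$ and taking boundary traces lowers orders correctly when the kernels are pulled back to $\Gamma\times\Gamma$ --- in particular that the first-order magnetic term of $L$ perturbs $G_0$ only by a kernel of the regularity of $|x-y|\ln|x-y|$, so that neither the order of $A$ nor its principal symbol is affected. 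Once this is in place, the Fredholm/index bookkeeping and the injectivity argument are routine.
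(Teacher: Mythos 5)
Your proposal is correct, and its first half coincides with the paper's: both reduce the pseudo-differential statement to the diagonal asymptotics of $G_0$ from Lemma~\ref{lem:kernel} and the standard layer-potential calculus in \cite[Chapter~7, \S 11]{tay2} (the paper simply cites Propositions~11.2 and 11.5 there, so the ``main obstacle'' you flag at the end is precisely the step the authors outsource to that reference, as do you). Where you genuinely diverge is the injectivity of $A$. The paper argues pointwise with the potential $u=\mathcal{A}h$: it shows $u\equiv 0$ in $K^\circ$ from $Lu=0$, $u|_\Gamma=0$ and the factorization $L=\overline{Q}Q+b\ge b$, then invokes the jump of $\partial_N(\mathcal{A}h)$ across $\Gamma$ and the Gaussian decay of $G_0$ to justify the same integration by parts in $\Omega$, concluding $h=\partial_N w=0$; this is exactly your parenthetical ``alternative'' argument. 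Your primary argument instead packages all of this into the single identity $\langle A\alpha,\alpha\rangle=\langle R_0\mu,\mu\rangle=\|L^{-1/2}\mu\|^2$ with $\mu=\alpha\,\mathrm{d}S$, which is shorter and avoids both the jump relation and the decay-at-infinity justification; its only extra cost is checking that $\mu$ lies in the dual of the form domain $H^1_{\Ab_0}(\R^2)$ so that the pairing and $L^{-1/2}\mu$ make sense, which the trace theorem supplies. Both routes rest on the same underlying fact $L\ge b>0$. You also spell out the Fredholm/index-zero bookkeeping (homotopy of the positive principal symbol to that of $(1-\Delta_\Gamma)^{-1/2}$) that the paper leaves implicit in its citation; that is a harmless and arguably welcome addition.
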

\begin{proof}
That $A$ and $B$ are pseudo-differential operators of order
$-1$ and the fact that $A$ is elliptic is due to the asymptotic
behavior of the Green's potential $G_0$
near the diagonal (see Lemma~\ref{lem:kernel}. We refer the reader to
Taylor's book \cite[Chapter~7, Proposition~11.2]{tay2} for a proof.

To prove that $A$ is an isomorphism, we follow the proof of \cite[Chapter~7,
Proposition~11.5]{tay2} with the necessary modifications.

Assume that $h\in C^\infty(\Gamma)$ with $Ah=0$. If we define $u(x)$ by
$u(x)=\mathcal{A}h(x)$, $x\in K^\circ$, then $u$ satisfies
\begin{equation*}
\begin{cases}
-(\nabla-ib\mathbf{A}_0)^2u = 0 & \text{in $K^\circ$} \\
u = 0 & \text{on $\Gamma$.}
\end{cases}
\end{equation*}
We use~\eqref{eq-creann} and integrate by parts, to get
\begin{equation*}
0 = \langle -(\nabla-b\mathbf{A}_0A)^2 u,u\rangle_{L^2(K)} = b\|u\|_{L^2(K)}^2
+ \|Qu\|_{L^2(K)}^2
\end{equation*}
and so $u\equiv 0$ in $K$, i.e.
\begin{equation}\label{eq-Ah0}
\mathcal{A}h(x)\equiv 0\quad \text{in $K^\circ$.}
\end{equation}

It follows from~\cite[Chapter~7, Proposition~11.3]{tay2} that
$\partial_N(\mathcal{A}h)(x)$ makes
a jump across the boundary $\Gamma$ of size $h$, so if we let
$w(x)=\mathcal{A}h(x)$, $x\in\Omega$, then it satisfies
\begin{equation}\label{eq-outer}
\begin{cases}
-(\nabla-ib\mathbf{A}_0)^2w = 0 & \text{in $\Omega$} \\
\partial_N w = h & \text{on $\Gamma$.}
\end{cases}
\end{equation}
Since $\mathcal{A}h$ does not jump across $\Gamma$, we see by~\eqref{eq-Ah0}
that $w=0$ on $\Gamma$.

From the exponential decay of $G_0(x,y)$ as $|x-y|\to\infty$ it follows that
$w(x)=O(|x|^{-N})$ as $|x|\to\infty$ for all $N>0$.
Moreover $w$ is smooth. Hence we can integrate by parts in $\Omega$ to find
\begin{equation*}
0 = \langle -(\nabla-ib\mathbf{A}_0)^2 w,w\rangle_{L^2(\Omega)} = b\|w\|_{L^2(\Omega)}^2
+ \|Qw\|_{L^2(\Omega)}^2,
\end{equation*}
and hence $w\equiv 0$ in $\Omega$. From~\eqref{eq-outer} we see that $h=0$.
\end{proof}

We conclude the section with the following lemma.

\begin{lemma}\label{lem:RobDirch}
There exists a positive constant $C_0$ depending only on $\Gamma$
such that, assuming $\gamma\in C^\infty(\Gamma)$  satisfies
$|\gamma(x)|>C_0$ for all $x\in\Gamma$, then for any function
$u=u_\Omega\oplus u_K\in L^2(\R^2)$ satisfying $u_\Omega\in H^1_{\rm
loc}(\Omega)$ and $u_K\in H^1(K)$, it holds that,
\begin{displaymath}
\partial_\Gamma
u_\Omega=A^{-1}\left(B+\left(\gamma+\frac12\right)I\right)u_\Omega\,,\quad
\partial_\Gamma
u_K=A^{-1}\left(B+\left(\gamma-\frac12\right)I\right)u_K\,,
\end{displaymath}
\begin{displaymath}
u_\Omega=\left(B+\left(\gamma+\frac12\right)I\right)^{-1}A(\partial_\Gamma
u_\Omega)\,, \quad
u_K=\left(B+\left(\gamma-\frac12\right)I\right)^{-1}A(\partial_\Gamma
u_K)\,.
\end{displaymath}
Here $A$ and $B$ are the operators from
\eqref{eq-boundaryop}.
\end{lemma}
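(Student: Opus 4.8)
The plan is to obtain both pairs of identities from Lemma~\ref{lem-green} by inverting, on suitable function spaces over $\Gamma$, the operator $A$ for the first pair and the operators $B+(\gamma\pm\tfrac12)I$ for the second. The first pair costs almost nothing: by Lemma~\ref{lem:pseudo} the operator $A$ is an elliptic pseudo-differential operator of order $-1$ and an isomorphism $L^2(\Gamma)\to H^1(\Gamma)$, so by ellipticity it is invertible on every Sobolev space over $\Gamma$. As $u_\Omega\in H^1_{\rm loc}(\Omega)$ and $u_K\in H^1(K)$, the boundary values $u_\Omega|_\Gamma,u_K|_\Gamma$ lie in $H^{1/2}(\Gamma)$ while $\partial_\Gamma u_\Omega,\partial_\Gamma u_K$ lie in $H^{-1/2}(\Gamma)$; applying $A^{-1}$ to the two identities of Lemma~\ref{lem-green} then yields $\partial_\Gamma u_\Omega=A^{-1}(B+(\gamma+\tfrac12)I)u_\Omega$ and the analogue for $u_K$.

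The real content is the invertibility of $B+(\gamma+\tfrac12)I$ and $B+(\gamma-\tfrac12)I$. Writing $M_f$ for multiplication by $f\in C^\infty(\Gamma)$, I would factor
\begin{equation*}
B+\bigl(\gamma\pm\tfrac12\bigr)I=M_{\gamma\pm1/2}\bigl(I+M_{\gamma\pm1/2}^{-1}B\bigr).
\end{equation*}
By the reverse triangle inequality $|\gamma(x)\pm\tfrac12|\ge|\gamma(x)|-\tfrac12>C_0-\tfrac12$ for all $x\in\Gamma$ whenever $C_0>\tfrac12$, so $M_{\gamma\pm1/2}$ is boundedly invertible on $L^2(\Gamma)$ with $\|M_{\gamma\pm1/2}^{-1}\|\le(C_0-\tfrac12)^{-1}$. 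Since $B$ is bounded on $L^2(\Gamma)$ (indeed Hilbert--Schmidt, by Remark~\ref{rem:kernel}), this gives $\|M_{\gamma\pm1/2}^{-1}B\|\le\|B\|_{\mathcal L(L^2(\Gamma))}(C_0-\tfrac12)^{-1}$.

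This dictates the choice $C_0:=\|B\|_{\mathcal L(L^2(\Gamma))}+\tfrac12$, which by Remark~\ref{rem:kernel} depends only on $\Gamma$ (for fixed $b$). With it, $\|M_{\gamma\pm1/2}^{-1}B\|<1$, so $I+M_{\gamma\pm1/2}^{-1}B$ is invertible by the Neumann series and hence so is $B+(\gamma\pm\tfrac12)I$ on $L^2(\Gamma)$; a routine bootstrap (using that $B$ has order $-1$ and that $M_{\gamma\pm1/2}^{\pm1}$ preserve Sobolev spaces over $\Gamma$) extends the inverse to $H^{1/2}(\Gamma)$, where the right-hand sides $A(\partial_\Gamma u_\Omega)$ and $A(\partial_\Gamma u_K)$ of Lemma~\ref{lem-green} live. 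Applying these inverses to the identities of Lemma~\ref{lem-green} produces $u_\Omega=(B+(\gamma+\tfrac12)I)^{-1}A(\partial_\Gamma u_\Omega)$ and its counterpart for $u_K$, completing the argument.

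I do not expect a serious obstacle here: the only care needed is bookkeeping the Sobolev spaces on $\Gamma$ on which each operator acts, and making the geometric constant $C_0$ explicit. The point worth stressing is that the hypothesis that $\gamma$ be large is intrinsic to this route: one inverts $B+(\gamma\pm\tfrac12)I$ quantitatively, treating $B$ as a small perturbation of the invertible multiplication operator $M_{\gamma\pm1/2}$, and a softer Fredholm-type argument exploiting only the compactness of $B$ would still require establishing injectivity of $B+(\gamma\pm\tfrac12)I$, which does not appear to be any easier.
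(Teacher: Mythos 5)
Your proof is correct and follows essentially the same route as the paper's: the paper also chooses $C_0$ just above $\|B\|$ (it takes $C_0>\|B\|+1$) so that $B+(\gamma\pm\frac12)I$ is an invertible perturbation of the multiplication operator, and then concludes by combining Lemma~\ref{lem-green} with the invertibility of $A$ from Lemma~\ref{lem:pseudo}. Your Neumann-series factorization through $M_{\gamma\pm 1/2}$ is in fact a more careful justification of the paper's one-line claim that $0\notin\sigma\bigl(B+(\gamma\pm\frac12)I\bigr)$.
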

\begin{proof}
Pick $C_0$ such that $C_0>\|B\|+1$. Since $B$ is a bounded operator,
the spectrum of $B$ is strictly included in the open ball of center
$0$ and radius $C_0$. The hypothesis we made on $\gamma$ guarantees
that
\begin{displaymath}
0\not\in\sigma \left(B+\left(\gamma\pm\frac12\right)I\right)\,.
\end{displaymath} Thus we can invert the operator
$B+\left(\gamma\pm\frac12\right)I$. Now  invoking
Lemma~\ref{lem-green} finishes the proof of the lemma.
\end{proof}

\subsection{A result on Toeplitz operators}
Recall the Landau levels $\{\Lambda_n\}_{n\in\mathbb N}$ together
with their eigenspaces $\{\mathcal L_n\}_{n\in\mathbb N}$ introduced in
Section~\ref{sec:landau}. For all $n\in\mathbb N$, we denoted by
$P_n$ the orthogonal projector on the space $\mathcal L_n$. Given a
positive integer $n\in\mathbb N$ and a compact simply connected
domain $U\subset\R^2$ with smooth boundary, the Toeplitz operator
$S_n^U$ is defined by,
\begin{equation}\label{eq-toep}
S_n^U=P_n\chi_UP_n\quad \text{in }L^2(\R^2)\,.
\end{equation}
Here $\chi_U$ is the characteristic function of $U$. Since
$\im(\chi_U P_n)\subset H^1(U)$ and the boundary of $U$ is smooth,
then $\chi_U P_n$ is a compact operator, and so is the Toeplitz
operator $S_n^U$.

We state the following lemma which we take from
\cite[Lemma~3.2]{fipu}.

\begin{lemma}\label{lem:toepspectrum}
Given $n\in\mathbb N$, denote by $s_1^{(n)}\geq s_2^{(n)}\geq
\ldots$ the decreasing sequence of eigenvalues of $S^U_n$. Then,
\begin{equation*}
\lim_{j\to\infty}\big(j! s_j^{(n)}\big)^{1/j} =
\frac{b}{2}\left(\kap(U)\right)^2.
\end{equation*}
\end{lemma}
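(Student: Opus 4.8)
I would pass to the Bargmann--Fock picture, identify $S_n^U$ with a restriction operator on a (poly-)Fock space, and extract the asymptotics from classical potential theory; I describe the plan in the model case $n=1$, the general $n$ being the same with the true poly-analytic Fock space $\mathscr F^{(n)}_b$ (whose reproducing kernel differs from that of $\mathcal L_1$ by the fixed Laguerre polynomial $L_{n-1}$) in place of $\mathscr F^2_b$, and poly-analytic orthogonal polynomials in place of the usual ones --- this is what \cite{fipu} carries out, and the potential-theoretic input below is insensitive to this change. So let $n=1$. Writing $z=x_1+ix_2$ and $\Psi=\tfrac b4|z|^2$, the map $u=f\,e^{-\Psi}$ identifies $\mathcal L_1$ with the Fock space $\mathscr F^2_b=\{f\ \text{entire}:\ \int_{\mathbb C}|f|^2 e^{-b|z|^2/2}\,\mathrm dA<\infty\}$, whose reproducing kernel is a constant multiple of $e^{\frac b2 z\bar w}$; under this identification $S_1^U=P_1\chi_U P_1$ is unitarily equivalent to $R^*R$, where $R\colon\mathscr F^2_b\to L^2\!\big(U,\,e^{-b|z|^2/2}\,\mathrm dA\big)$ is the restriction map $f\mapsto f|_U$. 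Hence $s_j^{(1)}=\sigma_j(R)^2$, and by the covariance of the construction under magnetic translations I may assume $0$ lies in the interior of $U$.

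The constant is pinned down by the following heuristic, which also dictates the two-sided estimate I would carry out. In $\mathscr F^2_b$ the monomials are orthogonal with $\|z^k\|^2=k!/(b/2)^k$, so every monic polynomial of degree $j$ has squared Fock norm at least $j!/(b/2)^j$; on the other hand the monic degree-$j$ polynomial that is smallest on $U$ (the Chebyshev polynomial of $U$) has sup-norm on $U$ asymptotic to $(\kap(U))^j$, by the classical identification of the Chebyshev constant with the logarithmic capacity. A Fock function of ``degree $\sim j$'' that is concentrated on $U$ therefore has $\|f\|^2_{L^2(U)}/\|f\|^2_{\mathscr F}$ of order $\big(\tfrac b2(\kap(U))^2\big)^j/j!$. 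For the upper bound on $s_j^{(1)}$ I would use the min--max principle, $\sigma_j(R)^2=\min_{\dim L=j-1}\max\{\,\|f\|^2_{L^2(U)}/\|f\|^2_{\mathscr F}:f\in\mathscr F^2_b,\ f\perp L\,\}$, with $L$ built from the Faber polynomials of $U$ of degrees $\le j-2$, estimating the supremum by combining a Bernstein--Walsh type bound for the polynomials involved with $\|z^k\|^2=k!/(b/2)^k$, so that the capacity of $U$ --- not merely its diameter --- enters. For the lower bound I would exhibit, for each $j$, an explicit $j$-dimensional test subspace of $\mathscr F^2_b$ (again built from the conformal/Faber structure of $U$) on which the Rayleigh quotient is bounded below by $\big(\tfrac b2(\kap(U))^2-o(1)\big)^j/j!$, the potential-theoretic input now being the leading-coefficient asymptotics $\kappa_m^{1/m}\to(\kap(U))^{-1}$ for the Bergman-type orthogonal polynomials of $U$ (Carleman, Suetin; Stahl--Totik for general regular sets). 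Combining the two bounds, taking $j$-th roots, and using Stirling's formula $(j!)^{1/j}\sim j/e$ to cancel the factorials coming from the Fock norms against the $j!$ in the statement, one obtains $\lim_{j\to\infty}\big(j!\,s_j^{(n)}\big)^{1/j}=\tfrac b2\big(\kap(U)\big)^2$.

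The main difficulty is precisely the step that separates the sharp answer from an easy one. A cheap argument --- domain monotonicity of $s_j^{(n)}$ (immediate from min--max) together with the exactly solvable case of a disk centred at the origin, for which the monomials are the eigenfunctions and $s_j$ is an incomplete-$\Gamma$ ratio --- gives only $\limsup_{j}\big(j!\,s_j^{(n)}\big)^{1/j}\le\tfrac b2\rho^2$ with $\rho$ the radius of a disk containing $U$; since $\kap(U)\le\rho$ this overshoots unless $U$ is essentially a disk. Replacing $\rho$ by $\kap(U)$ forces one to genuinely feed the logarithmic capacity into the operator estimate through the extremal (Chebyshev/Faber) polynomials of $U$ and to control the size of their coefficients in the Fock norm; doing this, and proving the required orthogonal-polynomial asymptotics uniformly in the Landau level $n$ (where one needs poly-analytic analogues of the Carleman--Suetin theorems), is the technical heart of the matter --- it is carried out in \cite[Lemma~3.2]{fipu}.
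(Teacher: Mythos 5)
The paper offers no proof of this lemma: it is quoted verbatim from \cite[Lemma~3.2]{fipu}, and your proposal likewise defers the technical core (the Bernstein--Walsh/Faber estimates and the leading-coefficient asymptotics $\kappa_m^{1/m}\to(\kap(U))^{-1}$, together with their poly-analytic analogues for higher Landau levels) to that same reference. Your sketch of what lies inside the citation --- the Fock-space identification of $S_n^U$ with $R^*R$, the heuristic pinning down the constant $\tfrac{b}{2}(\kap(U))^2$ via Chebyshev polynomials and $\|z^k\|^2\asymp k!(2/b)^k$, and the two-sided min--max argument --- is a faithful outline of the Filonov--Pushnitski proof, so the proposal is correct and takes essentially the same route as the paper.
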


\section{Proof of Theorem~\ref{thm-KP}}\label{Sec-proof}
Recall the compact simply connected smooth domain $K\subset\R^2$ and
the exterior domain $\Omega=\R^2\setminus K$. We have introduced the
operator $L_{\Omega,B}^\gamma$ with quadratic form
$q_{\Omega,B}^\gamma$ from \eqref{QF-q}. We will use also the
corresponding operator in $K$, namely $L_{K,B}^{-\gamma}$.

Since the quadratic forms $q_{\Omega,b}^\gamma$ and
$q_{K,b}^{-\gamma}$ are semi-bounded, we get up to a shift by a
positive constat that they are strictly positive. Thus we assume,
the hypothesis:
\begin{itemize}
\item[(H1)] The quadratic forms $q_{\Omega,b}^\gamma$ and
$q_{K,b}^{-\gamma}$ from \eqref{QF-q} are strictly positive.
\end{itemize}

The relevance of the hypothesis (H1) is that it provides us with the
existence of the resolvents of $L_{\Omega,b}^\gamma$ and
$L_{K,b}^{-\gamma}$.

When there is no ambiguity, we will skip $b$ and $\gamma$ from the
notation, and write $L_\Omega$, $L_K$, $q_\Omega$ and $q_K$ for the
operators $L_{\Omega,b}^\gamma$, $L_{K,b}^{-\gamma}$, the quadratic
forms $q_{\Omega,b}^\gamma$ and $q_{K,B}^{-\gamma}$ respectively.
Notice that, for all $u=u_\Omega\oplus u_K\in L^2(\R^2)$ such that
$u_\Omega\in D(q_\Omega)$ and $u_K\in D(q_K)$, we have,
\begin{align*}
q_\Omega(u_\Omega)&=\int_\Omega|(\nabla-ib\Ab_0)u_\Omega|^2\ed x+\int_\Gamma\gamma|u_\Omega|^2\ed S\\
q_K(u_K)&=\int_K|(\nabla-ib\Ab_0)u_K|^2\ed x-\int_\Gamma\gamma|u_K|^2\ed S\,.
\end{align*}
If in addition, $u\in H^1_{\Ab_0}(\R^2)$, then
$q_\Omega(u_\Omega)+q_K(u_K)=\int_{\R^2}|(\nabla-ib\Ab_0)u|^2\ed x$.
We point also that if $u_\Omega\in D(L_\Omega)$ and $u_K\in D(L_K)$,
then
\begin{displaymath}
\partial_\Gamma u_\Omega=\partial_\Gamma u_K=0,
\end{displaymath}
where $\partial_\Gamma$ is the trace operator from
\eqref{eq-normalder}.

\subsection{Extension of $L_\Omega$ to an operator in $L^2(\R^2)$}
We pointed earlier that since $\Omega$ and $K$ are complementary in
$\R^2$, the space $L^2(\R^2)$ is decomposed as a direct sum
$L^2(\Omega)\oplus L^2(K)$. This permits us to extend the operator
$L_\Omega$ in $L^2(\Omega)$ to an operator $\widetilde L$ in
$L^2(\R^2)$. Actually, let $\widetilde L=L_\Omega\oplus L_K$ in
$D(L_\Omega)\oplus D(L_K)\subset L^2(\R^2)$. More precisely,
$\widetilde L$ is the self-adjoint extension associated with the
quadratic form
\begin{equation}\label{eq-QF-q}
\widetilde q(u)=q_\Omega(u_\Omega)+q_K(u_K)\,,\quad u=u_\Omega\oplus
u_K\in L^2(\R^2)\,.
\end{equation}
By our hypothesis (H1), we may speak of the resolvent $\widetilde
R=\widetilde L^{-1}$ of $\widetilde L$. We then have the following
lemma.
\begin{lemma}\label{lem:esssp}
With $\widetilde L$, $\widetilde R$ and $L_\Omega$ defined as above,
it holds that:
\begin{enumerate}
\item $\sigma_{\text{ess}}(L_\Omega)=\sigma_{\text{ess}}(\widetilde L)$.
\item $\lambda\in\sigma_{\text{ess}}(\widetilde R)\setminus\{0\}$ if
and only if $\lambda\not=0$ and  $\lambda^{-1}\in\sigma_{\text{ess}}(L_\Omega)$.
\end{enumerate}
\end{lemma}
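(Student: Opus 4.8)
The plan is to dispose of the two assertions separately; both are soft, using only the block structure $\widetilde L=L_\Omega\oplus L_K$ and the spectral mapping between a strictly positive self-adjoint operator and its bounded inverse.

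For (1): since $\widetilde L$ is the orthogonal direct sum $L_\Omega\oplus L_K$ on $L^2(\R^2)=L^2(\Omega)\oplus L^2(K)$, I would first record the standard identity
\[
\sigma_{\text{ess}}(\widetilde L)=\sigma_{\text{ess}}(L_\Omega)\cup\sigma_{\text{ess}}(L_K).
\]
The inclusion $\supseteq$ is clear by extending a singular Weyl sequence of one summand by zero; for $\subseteq$ one splits a singular Weyl sequence $u_n=u_n^\Omega\oplus u_n^K$ for $\widetilde L$ into its two components and passes to a subsequence along which one of $\|u_n^\Omega\|,\|u_n^K\|$ stays bounded away from $0$, producing a singular Weyl sequence for the corresponding summand. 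It then remains to see that $\sigma_{\text{ess}}(L_K)=\emptyset$. Since $K$ is bounded with smooth boundary and $\Ab_0$ is smooth on $\overline K$, the form $q_K$ is semibounded and closed with form domain $H^1(K)$ (the boundary term $-\int_\Gamma\gamma|u_K|^2\ed S$ is infinitesimally form bounded with respect to $\int_K|(\nabla-ib\Ab_0)u_K|^2\ed x$ by the trace inequality, so it does not enlarge the form domain). As $H^1(K)\hookrightarrow L^2(K)$ compactly by Rellich's theorem, $L_K$ has compact resolvent and hence purely discrete spectrum. Combining the two facts gives (1).

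For (2): by (H1) the operator $\widetilde L$ is strictly positive and self-adjoint, so $0\notin\sigma(\widetilde L)$ and $\widetilde R=\widetilde L^{-1}$ is a bounded, self-adjoint, injective, positive operator with $\widetilde R\widetilde L=I$ on $D(\widetilde L)$. I would prove the equivalence ``$\mu\in\sigma_{\text{ess}}(\widetilde L)\Longleftrightarrow\mu^{-1}\in\sigma_{\text{ess}}(\widetilde R)$'' for every $\mu\neq0$ via singular Weyl sequences. If $(u_n)\subset D(\widetilde L)$ is normalized, weakly null, with $(\widetilde L-\mu)u_n\to0$, then $v_n:=\widetilde L u_n$ is bounded, weakly null (test against the dense set $D(\widetilde L)$), satisfies $\|v_n\|\to|\mu|>0$, and
\[
(\widetilde R-\mu^{-1})v_n=u_n-\mu^{-1}\widetilde L u_n=-\mu^{-1}(\widetilde L-\mu)u_n\longrightarrow0,
\]
so $v_n/\|v_n\|$ is a singular Weyl sequence for $\widetilde R$ at $\mu^{-1}$; the reverse implication is identical, starting from a singular Weyl sequence $(w_n)$ for $\widetilde R$ at $\mu^{-1}\neq0$ and setting $u_n:=\widetilde R w_n\in D(\widetilde L)$. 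Feeding part (1) into this equivalence yields exactly the claim: $\lambda\in\sigma_{\text{ess}}(\widetilde R)\setminus\{0\}$ iff $\lambda^{-1}\in\sigma_{\text{ess}}(\widetilde L)=\sigma_{\text{ess}}(L_\Omega)$.

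I do not anticipate a genuine obstacle here; the argument is routine functional analysis. The one point deserving a line of care is the claim $\sigma_{\text{ess}}(L_K)=\emptyset$ in the presence of the (possibly large) Robin weight: one must verify that the boundary form is a form-small perturbation, so that the form domain of $L_K$ remains $H^1(K)$ and the compact Sobolev embedding applies — the trace inequality settles this.
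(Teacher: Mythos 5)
Your argument is correct and follows the same route as the paper: decompose $\sigma_{\text{ess}}(\widetilde L)=\sigma_{\text{ess}}(L_\Omega)\cup\sigma_{\text{ess}}(L_K)$, kill $\sigma_{\text{ess}}(L_K)$ via compactness of the resolvent of $L_K$ on the bounded domain $K$, and then transfer the essential spectrum between $\widetilde L$ and $\widetilde R=\widetilde L^{-1}$ using $0\notin\sigma(\widetilde L)$. You merely supply the details (Weyl sequences, form-boundedness of the Robin term, Rellich) that the paper leaves implicit.
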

\begin{proof}
Since $\widetilde L=L_\Omega\oplus L_K$, then $\sigma(\widetilde
L)=\sigma(L_\Omega)\cup\sigma(L_K)$. But $K$ is compact and has a
smooth boundary, hence $L_K$  has a compact resolvent. Thus
$\sigma_{\text{ess}}(L_K)=\varnothing$ and here it follows the first
assertion in the lemma above. Moreover, $L_\Omega$ and $L_K$ are
both strictly positive by hypothesis, hence $0\not\in\sigma(
\widetilde L)$. It is then straight forward that
\begin{displaymath}
\sigma_{\text{ess}}(\widetilde
L)=\{\lambda\in\R\setminus\{0\}~:~\lambda^{-1}
\in\sigma_{\text{ess}}(\widetilde R)\}.
\end{displaymath}
\end{proof}

\subsection{Essential spectrum of $L_\Omega$} With the operator
$\widetilde L$ introduced above, we can view $L_\Omega$ as a
perturbation of the Landau Hamiltonian $L$ in $\R^2$ introduced in
\eqref{eq-LH}. Actually, we define
\begin{displaymath}
V=\widetilde R-R_0=\widetilde L^{-1}-L^{-1}.
\end{displaymath}
Then we have the following result on the operator $V$.

\begin{lemma}\label{lem:V}
The operator $V\in\mathcal L(L^2(\R^2))$ is positive and compact.
Moreover, for all $f,g\in L^2(\R^2)$, it holds that
\begin{equation}\label{eq-V}
\langle f,Vg\rangle=\int_{\Gamma}\partial_\Gamma
u\cdot\overline{(v_\Omega-v_K)}\ed S\,,\end{equation} where $u=R_0f$
and $v=\widetilde R g$.
\end{lemma}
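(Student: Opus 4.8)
The plan is to prove the three assertions of the lemma — boundedness, positivity, and the identity \eqref{eq-V} — essentially separately, and to deduce compactness from \eqref{eq-V} at the end. Boundedness is immediate, since $V$ is the difference of the two bounded operators $\widetilde R$ and $R_0$. For positivity I apply the Pushnitski--Rozenblum comparison principle (Lemma~\ref{lem-PR}) with $A=\widetilde L$ and $B=L$. Both are strictly positive — $L$ because $\Lambda_1=b>0$, and $\widetilde L$ by hypothesis (H1) — so that $0\notin\sigma(L)\cup\sigma(\widetilde L)$. The form domain of $\widetilde L$ is $H^1_{\Ab_0}(\Omega)\oplus H^1_{\Ab_0}(K)$, which contains the form domain $H^1_{\Ab_0}(\R^2)$ of $L$ (just restrict a function to $\Omega$ and to $K$); and the two forms agree on $H^1_{\Ab_0}(\R^2)$, because for such a $u$ the traces of $u|_\Omega$ and $u|_K$ on $\Gamma$ coincide, so the boundary terms $\pm\int_\Gamma\gamma|u|^2\ed S$ appearing in $q_\Omega$ and $q_K$ cancel and $\widetilde q(u)=\int_{\R^2}|(\nabla-ib\Ab_0)u|^2\ed x=\|L^{1/2}u\|^2$. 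Lemma~\ref{lem-PR} then gives $L^{-1}\le\widetilde L^{-1}$, that is, $V\ge 0$.

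For the identity \eqref{eq-V}, put $u=R_0f\in D(L)$ and $v=v_\Omega\oplus v_K=\widetilde Rg$, so that $-(\nabla-ib\Ab_0)^2u=f$ on $\R^2$, while $-(\nabla-ib\Ab_0)^2v_\Omega=g|_\Omega$ in $\Omega$ and $-(\nabla-ib\Ab_0)^2v_K=g|_K$ in $K$. Since $R_0$ is self-adjoint,
\[
\langle f,Vg\rangle=\langle f,v\rangle-\langle R_0f,g\rangle=\langle f,v\rangle-\langle u,g\rangle ,
\]
and splitting every inner product into its restriction to $\Omega$ and to $K$ turns this into a sum of two brackets of the form $\langle -(\nabla-ib\Ab_0)^2u,v_\bullet\rangle-\langle u,-(\nabla-ib\Ab_0)^2v_\bullet\rangle$, one over $\Omega$ and one over $K$. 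To each I apply Green's formula for the magnetic Laplacian. The outward normals of $\Omega$ and of $K$ along $\Gamma$ are opposite, so the two boundary contributions come with opposite signs; and since $u=R_0f$ lies in $H^2$ in a neighbourhood of $\Gamma$ (recall $L$ has no boundary), the traces of $u$ and of $\partial_N u$ across $\Gamma$ are single-valued. Collecting the four boundary integrals therefore gives
\[
\langle f,Vg\rangle=\int_\Gamma u\,\overline{(\partial_Nv_\Omega-\partial_Nv_K)}\ed S-\int_\Gamma(\partial_Nu)\,\overline{(v_\Omega-v_K)}\ed S .
\]
Now I invoke the boundary conditions built into $D(L_\Omega)$ and $D(L_K)$, namely $\partial_\Gamma v_\Omega=\partial_\Gamma v_K=0$, i.e.\ $\partial_Nv_\Omega=-\gamma v_\Omega$ and $\partial_Nv_K=-\gamma v_K$ on $\Gamma$ (with $\gamma$ real-valued); hence $\partial_Nv_\Omega-\partial_Nv_K=-\gamma(v_\Omega-v_K)$, and the two integrals above collapse into $\int_\Gamma(\partial_Nu+\gamma u)\,\overline{(v_\Omega-v_K)}\ed S=\int_\Gamma(\partial_\Gamma u)\,\overline{(v_\Omega-v_K)}\ed S$, which is \eqref{eq-V}.

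Compactness then follows from \eqref{eq-V}, which displays $V$ as a composition $\Phi^{*}T$, where $Tg=(v_\Omega-v_K)|_\Gamma$ with $v=\widetilde Rg$, and $\Phi f=\partial_\Gamma(R_0f)|_\Gamma$. By elliptic boundary regularity and the smoothness of $\Gamma$, the map $g\mapsto v_\Omega$ (resp.\ $g\mapsto v_K$) is bounded from $L^2$ into $H^2$ near $\Gamma$, so $T$ is bounded from $L^2(\R^2)$ into $H^{3/2}(\Gamma)$, and hence, composed with the compact embedding $H^{3/2}(\Gamma)\hookrightarrow L^2(\Gamma)$, it is compact as a map into $L^2(\Gamma)$; since $\Phi$ is bounded from $L^2(\R^2)$ into $L^2(\Gamma)$ (elliptic regularity for $R_0$ together with the trace theorem), $V=\Phi^{*}T$ is compact. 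I expect the one delicate point to be the justification of Green's formula on the \emph{unbounded} piece $\Omega$: one must check that no boundary term survives at infinity. For this I would use the Gaussian decay of the kernel $G_0$ from Lemma~\ref{lem:kernel}, which makes $u=R_0f$ decay rapidly, together with $v_\Omega\in H^1_{\Ab_0}(\Omega)$ and $-(\nabla-ib\Ab_0)^2v_\Omega\in L^2(\Omega)$, and a standard cut-off argument on balls $B_R$ with $R\to\infty$. The rest is bookkeeping of traces and signs.
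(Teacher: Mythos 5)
Your proof is correct and follows essentially the same route as the paper: positivity via Lemma~\ref{lem-PR} with $A=\widetilde L$, $B=L$; the identity \eqref{eq-V} by writing $\langle f,Vg\rangle=\langle Lu,v\rangle-\langle u,\widetilde Lv\rangle$ and applying Green's formula separately on $\Omega$ and $K$, using $\partial_\Gamma v_\Omega=\partial_\Gamma v_K=0$; and compactness from the compactness of the trace maps, which you flesh out more explicitly than the paper does. The one blemish is the final algebraic step: combining $\int_\Gamma u\,\overline{(\partial_N v_\Omega-\partial_N v_K)}\,dS-\int_\Gamma(\partial_N u)\,\overline{(v_\Omega-v_K)}\,dS$ with $\partial_N v_\bullet=-\gamma v_\bullet$ gives $-\int_\Gamma(\partial_\Gamma u)\,\overline{(v_\Omega-v_K)}\,dS$ rather than $+$, so your ``collapse'' flips a sign; since the overall sign of \eqref{eq-V} hinges on the orientation convention for $\nu_\Omega$ and is in any case immaterial for the subsequent two-sided estimates, this does not affect the argument.
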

\begin{proof}
Notice that the form domain $H_{\Ab_0}^1(\R^2)$ of $L$ is included
in that of $\widetilde L$, and that for $u\in H^1_{\Ab_0}(\R^2)$, we
have
\begin{displaymath}
\widetilde q(u)=\int_{\R^2}|(\nabla-ib\Ab_0)u|^2\ed x\,.
\end{displaymath}
Invoking Lemma~\ref{lem-PR}, we get that the operator $V$ is
positive.

Let us establish the identity in \eqref{eq-V}. Notice that $f=Lu$
and $g=\widetilde L v= L_\Omega v_\Omega\oplus L_K v_K$. Then we
have,
\begin{displaymath}
\langle f,Vg\rangle=\int_\Omega Lu\cdot
\overline{v_\Omega}\ed x+\int_KLu\cdot\overline{v_K}\ed x-\int_\Omega
u\cdot\overline{L_\Omega v_\Omega}\ed x-\int_K
u\cdot\overline{L_Kv_K}\ed x\,.
\end{displaymath}
The identity in \eqref{eq-V} then
follows by integration by parts and by using the boundary conditions
$\partial_\Gamma v_\Omega=\partial_\Gamma v_K=0$.

Knowing that the trace operators are compact, we conclude from
\eqref{eq-V} that $V$ is a compact operator.
\end{proof}

As corollary of Lemma~\ref{lem:V}, we get the first part of
Theorem~\ref{thm-KP} proved.

\begin{cor}\label{corl:proof-mainthm}
Assume the hypothesis (H1) above holds. Then
\begin{displaymath}
\sigma_{\text{ess}}(L_\Omega)=\{\Lambda_n~:~n\in\mathbb N\}\,,\quad
\Lambda_n=(2n-1)b\,,
\end{displaymath}
and for all $\varepsilon\in(0,b)$,
\begin{displaymath}
\sigma(L_\Omega)\cap(\Lambda_n,\Lambda_n+\varepsilon)\quad\text{is finite}
\quad\forall ~n\in\mathbb N\,.
\end{displaymath}
\end{cor}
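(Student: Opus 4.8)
The plan is to deduce Corollary~\ref{corl:proof-mainthm} directly from Lemma~\ref{lem:V} together with the two abstract results of Section~\ref{Sec-prel}. Recall that $V=\widetilde R-R_0$ is positive and compact, so that $\widetilde R=R_0+V$ is a compact positive perturbation of $R_0=L^{-1}$. First I would record the spectral picture of the Landau resolvent $R_0$: since $\sigma(L)=\{\Lambda_n:n\in\mathbb N\}$ with each $\Lambda_n$ an infinitely degenerate eigenvalue, we have $\sigma(R_0)=\sigma_{\text{ess}}(R_0)=\{\Lambda_n^{-1}:n\in\mathbb N\}\cup\{0\}$, and the nonzero points of $\sigma(R_0)$ are isolated eigenvalues of infinite multiplicity accumulating only at $0$.

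Next I would use Lemma~\ref{lem-biso} to control the essential spectrum of $\widetilde R=R_0+V$. Fix $n\in\mathbb N$; on any open interval $(\alpha,\beta)$ with $\Lambda_{n+1}^{-1}<\alpha<\Lambda_n^{-1}<\beta<\Lambda_{n-1}^{-1}$ (for $n=1$ take $\beta$ to the right of $\Lambda_1^{-1}$, using $0\notin\sigma(\widetilde R)$ only away from $0$) the spectrum of $R_0$ inside $(\alpha,\beta)$ is the single eigenvalue $\Lambda_n^{-1}$, hence discrete and not accumulating at $\beta$. Lemma~\ref{lem-biso} with $A=R_0$ and the compact positive $V$ then gives that $\sigma(\widetilde R)$ in $(\alpha,\beta)$ is discrete and does not accumulate at $\beta$. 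Since $V$ is compact, Weyl's theorem gives $\sigma_{\text{ess}}(\widetilde R)=\sigma_{\text{ess}}(R_0)=\{\Lambda_n^{-1}:n\in\mathbb N\}\cup\{0\}$; combining this with the preceding local statement, the only possible accumulation point of $\sigma(\widetilde R)$ in $(\alpha,\beta)$ is $\Lambda_n^{-1}$ itself, and because $V\geq0$ one has $\widetilde R\geq R_0$ in the form sense, so by min--max the eigenvalues of $\widetilde R$ in $(\Lambda_n^{-1},\beta)$ can accumulate at $\Lambda_n^{-1}$ at worst from above but are finite in number in any $(\Lambda_n^{-1}+\varepsilon',\beta)$.

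I would then transport this back to $L_\Omega$ via Lemma~\ref{lem:esssp}. Assertion~(1) of that lemma gives $\sigma_{\text{ess}}(L_\Omega)=\sigma_{\text{ess}}(\widetilde L)$, and assertion~(2) together with $\sigma_{\text{ess}}(\widetilde R)=\{\Lambda_n^{-1}:n\in\mathbb N\}\cup\{0\}$ yields
\begin{displaymath}
\sigma_{\text{ess}}(L_\Omega)=\{\lambda\in\R\setminus\{0\}:\lambda^{-1}\in\sigma_{\text{ess}}(\widetilde R)\}=\{\Lambda_n:n\in\mathbb N\}\,.
\end{displaymath}
Finally, the map $\lambda\mapsto\lambda^{-1}$ sends the interval $(\Lambda_n,\Lambda_n+\varepsilon)$ (with $\varepsilon\in(0,b)$, so it contains no other Landau level) to an interval of the form $(\Lambda_n^{-1}-\varepsilon',\Lambda_n^{-1})$ lying inside the $(\alpha,\beta)$ above; spectrum of $L_\Omega$ in $(\Lambda_n,\Lambda_n+\varepsilon)$ corresponds bijectively, via the spectral mapping for resolvents, to spectrum of $\widetilde R$ in $(\Lambda_n^{-1}-\varepsilon',\Lambda_n^{-1})$, which by the discreteness statement above and the fact that the accumulation at $\Lambda_n^{-1}$ can only be from the side $>\Lambda_n^{-1}$ (equivalently, eigenvalues of $L_\Omega$ accumulate at $\Lambda_n$ only from below) is a finite set. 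Hence $\sigma(L_\Omega)\cap(\Lambda_n,\Lambda_n+\varepsilon)$ is finite for every $n$.

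The main obstacle I anticipate is bookkeeping the directionality of the perturbation: $V\geq0$ forces $\widetilde R\geq R_0$, so near the eigenvalue $\Lambda_n^{-1}$ of $R_0$ the extra eigenvalues of $\widetilde R$ appear above $\Lambda_n^{-1}$, which translates to eigenvalues of $L_\Omega$ appearing \emph{below} $\Lambda_n$ — this is exactly why the clusters accumulate from below and why the interval $(\Lambda_n,\Lambda_n+\varepsilon)$ above the Landau level carries only finitely many eigenvalues. One must phrase Lemma~\ref{lem-biso} on an interval whose right endpoint $\beta$ is a spectral gap of $R_0$ strictly to the right of $\Lambda_n^{-1}$, and check the $n=1$ and large-$\lambda$ endpoints do not cause trouble; everything else is a routine application of Weyl's theorem, min--max monotonicity, and the spectral mapping $\lambda\leftrightarrow\lambda^{-1}$.
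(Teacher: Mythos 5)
Your overall strategy is the paper's: reduce to $\widetilde R=R_0+V$ via Lemma~\ref{lem:esssp}, get the essential spectrum from Weyl's theorem, and get finiteness from Lemma~\ref{lem-biso}. However, your application of Lemma~\ref{lem-biso} is on the wrong interval, and this is a genuine flaw rather than a cosmetic one. You take $(\alpha,\beta)$ with $\Lambda_n^{-1}$ in its interior and assert that the spectrum of $R_0$ there, namely the single point $\Lambda_n^{-1}$, is ``discrete.'' It is not: $\Lambda_n^{-1}$ is an eigenvalue of infinite multiplicity, hence belongs to $\sigma_{\text{ess}}(R_0)$, so the hypothesis of Lemma~\ref{lem-biso} fails on your interval. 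Worse, the conclusion you would draw from it is false: $\Lambda_n^{-1}\in\sigma_{\text{ess}}(\widetilde R)$ lies in your $(\alpha,\beta)$, so the spectrum of $\widetilde R$ there is not discrete either. The auxiliary argument you substitute --- that $V\geq 0$ gives $\widetilde R\geq R_0$ and ``by min--max'' eigenvalues can only appear above $\Lambda_n^{-1}$ --- is exactly the heuristic that Lemma~\ref{lem-biso} is there to make rigorous; the min--max principle does not directly compare eigenvalues sitting in a gap of the essential spectrum, so as written this step is not a proof.

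The fix is small and is precisely what the paper does: apply Lemma~\ref{lem-biso} with $\beta=\Lambda_n^{-1}$ itself and $(\alpha,\beta)=(\Lambda_n^{-1}-\tau,\Lambda_n^{-1})$ chosen inside the spectral gap of $R_0$ below $\Lambda_n^{-1}$ (possible since $\Lambda_{n+1}^{-1}<\Lambda_n^{-1}$). There $\sigma(R_0)\cap(\alpha,\beta)=\varnothing$, so the hypothesis holds trivially, and the conclusion is that $\sigma(\widetilde R)\cap(\Lambda_n^{-1}-\tau,\Lambda_n^{-1})$ is discrete and does not accumulate at $\Lambda_n^{-1}$; since interior accumulation points are excluded by $\sigma_{\text{ess}}(\widetilde R)=\{\Lambda_m^{-1}\}\cup\{0\}$, this set is finite, and inverting gives the finiteness of $\sigma(L_\Omega)\cap(\Lambda_n,\Lambda_n+\varepsilon)$ (noting $\sigma(L_\Omega)\subset\sigma(\widetilde L)$). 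Your identification of the essential spectrum via Weyl's theorem and Lemma~\ref{lem:esssp}, and your bookkeeping of the map $\lambda\mapsto\lambda^{-1}$, are correct and match the paper.
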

\begin{proof}
Invoking Lemma~\ref{lem:esssp}, it suffices to prove that
$\sigma_{\text{ess}}(\widetilde R)=\{\Lambda_n^{-1}~:~n\in\mathbb N\}$
in order to get the result concerning the essential spectrum of
$L_\Omega$. Notice that $\widetilde R=R_0+V$ with $V$ a compact
operator. Hence by Weyl's theorem, $\sigma_{\text{ess}}(\widetilde
R)=\sigma_{\text{ess}}(R_0)=\sigma_{\text{ess}}(L^{-1})$. But we know from
Section~\ref{sec:landau} that $\sigma_{\text{ess}}(R_0)
=\{\Lambda_n^{-1}~:~n\in\mathbb N\}$ as was required to
prove.

Since the operator $V$ is compact and positive, invoking Lemma~\ref{lem-biso}, we
get that $\sigma(\widetilde
R)\cap(\Lambda_n^{-1}-\varepsilon,\Lambda_n^{-1})$ is finite. Since
$\widetilde R=\widetilde L^{-1}$, this gives that
$\sigma(L_\Omega)\cap(\Lambda_n,\Lambda_n+\varepsilon)$ is finite.
\end{proof}

\subsection{Reduction to a Toeplitz operator}

In light of Corollary~\ref{corl:proof-mainthm}, we have only to
establish the second part of Theorem~\ref{thm-KP}, namely the
asymptotic formulae in \eqref{eq-thm-2}.

Let $n\in\mathbb N$ and pick $\tau>0$ such that
$\big((\LL_n^{-1}-2\tau,\LL_n^{-1}+2\tau)\setminus\{\LL_n^{-1}\}\big)\cap
\sigma_{\text{ess}}(\widetilde R)=\varnothing$. Denote by
$\{r_j^{(n)}\}_{j\geq 1}$ the decreasing sequence of eigenvalues of
$\widetilde R$ in the interval
$(\Lambda^{-1}_n,\Lambda_n^{-1}+\tau)$. In order to prove
\eqref{eq-thm-2}, it suffices to show that
\begin{equation}\label{eq-thm-resolvent}
\lim_{j\to\infty}\left(j!\left(r_j^{(n)}-\Lambda_n^{-1}\right)\right)^{1/j}
=\frac{b}2\left(\text{Cap}(K)\right)^2\,.
\end{equation}

We introduce the operator \begin{equation}\label{eq-Tn}
T_n=P_nVP_n\,, \end{equation} where $P_n$ is the orthogonal
projection on the eigenspace $\mathcal L_n$ associated with
$\Lambda_n$. By Lemma~\ref{lem:V}, $V$ is  a compact operator, hence
$T_n$ is also a compact operator.  Denote by $\{t_j^{(n)}\}$ the
decreasing sequence of eigenvalues of $T_n$.

The next lemma, proved in \cite[Proposition~2.2]{puro}, shows that
$r_j^{(n)}-\Lambda_n^{-1}$ are close to the eigenvalues of $T_n$.
\begin{lemma}\label{lem:RT}
Given $\varepsilon>0$ there exist integers $l$ and $j_0$ such that
\begin{equation*}
 (1-\varepsilon)t_{j+l}^{(n)}\leq r_{j}^{(n)}-\LL_n^{-1} \leq (1+\varepsilon)t_{j-l}^{(n)}, \quad\forall~j\geq j_0.
\end{equation*}
\end{lemma}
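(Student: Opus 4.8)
The statement is an instance of the abstract comparison result \cite[Proposition~2.2]{puro}, applied with $A=\widetilde R$, the positive compact perturbation $V$ of Lemma~\ref{lem:V}, and the isolated point $\Lambda_n^{-1}$ of $\sigma_{\text{ess}}(\widetilde R)$; I indicate how the argument runs in the present setting. Fix $n$ and $\tau$ as in the statement, so that $\sigma(R_0)\cap(\Lambda_n^{-1}-2\tau,\Lambda_n^{-1}+2\tau)=\{\Lambda_n^{-1}\}$, $P_n$ is the spectral projection of $R_0$ at $\Lambda_n^{-1}$, and $(\Lambda_n^{-1},\Lambda_n^{-1}+2\tau)$ is a spectral half-gap of $R_0$. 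Since $V\ge0$ is compact and $\widetilde R=R_0+V$, the eigenvalues of $\widetilde R$ that enter this half-gap do so by moving up from $\Lambda_n^{-1}$, and I would apply the Birman--Schwinger principle in a spectral gap: for $\mu\in(0,\tau)$, the number $\#\{j:\ r_j^{(n)}-\Lambda_n^{-1}>\mu\}$ equals $\#\{\text{eigenvalues of }X(\mu)\text{ less than }-1\}-l_0$, where $X(\mu):=V^{1/2}(R_0-\Lambda_n^{-1}-\mu)^{-1}V^{1/2}$ is compact (since $V^{1/2}$ is), and $l_0$ is a fixed nonnegative integer accounting for the finitely many eigenvalues of $\widetilde R$ in $[\Lambda_n^{-1}+\tau,\Lambda_n^{-1}+2\tau)$ — finite by Corollary~\ref{corl:proof-mainthm} — and for the bounded negative part of $X(\mu)$ coming from $\sigma(R_0)\cap(-\infty,\Lambda_n^{-1}-2\tau]$. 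This identity holds because $\mu\mapsto X(\mu)$ is nondecreasing in the quadratic-form order (its $\mu$-derivative is $V^{1/2}(R_0-\Lambda_n^{-1}-\mu)^{-2}V^{1/2}\ge0$), so every crossing of the level $-1$ by an eigenvalue of $X(\mu)$ is upward and corresponds to exactly one eigenvalue of $\widetilde R$.

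Next I would split $R_0-\Lambda_n^{-1}-\mu$ along the orthogonal decomposition $I=P_n\oplus\Pi_+\oplus\Pi_-$, where $\Pi_+$ and $\Pi_-$ are the spectral projections of $R_0$ onto $[\Lambda_n^{-1}+2\tau,\infty)$ and $(-\infty,\Lambda_n^{-1}-2\tau]$ respectively (when $n=1$ one has $\Pi_+=0$). This gives
\[
X(\mu)=-\frac1\mu\,V^{1/2}P_nV^{1/2}+Z_+(\mu)+Z_-(\mu),
\]
with $Z_\pm(\mu)=V^{1/2}(R_0-\Lambda_n^{-1}-\mu)^{-1}\Pi_\pm V^{1/2}$ compact, $Z_+(\mu)\ge0$, $Z_-(\mu)\le0$, and $\|Z_\pm(\mu)\|\le\|V\|/\tau$ uniformly in $\mu\in(0,\tau)$. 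The key algebraic point is that $V^{1/2}P_nV^{1/2}$ and $T_n=P_nVP_n$ have the same nonzero eigenvalues — they are $C^*C$ and $CC^*$ for $C=P_nV^{1/2}$ — so the leading term of $X(\mu)$ is $-\mu^{-1}$ times an operator whose eigenvalue sequence is $\{t_j^{(n)}\}$. The corrections $Z_\pm$ are bounded but not small in norm; however, each is compact with a definite sign, so for any $\delta\in(0,1)$ I may write $Z_+(\mu)\le\delta I+F_+$ and $-Z_-(\mu)\le\delta I+F_-$ with $F_\pm\ge0$ of finite rank $l_\pm(\delta)$, independent of $\mu$. Estimating $X(\mu)$ from above by $-\mu^{-1}V^{1/2}P_nV^{1/2}+\delta I+F_+$ and from below by $-\mu^{-1}V^{1/2}P_nV^{1/2}-\delta I-F_-$, and using that adding a positive operator of rank $l$ to a self-adjoint operator changes the number of its eigenvalues below any fixed level by at most $l$, and only downwards (subtracting such an operator, only upwards), one obtains, for all $\mu\in(0,\tau)$,
\[
\#\{j:\ t_j^{(n)}>(1+\delta)\mu\}-l\ \le\ \#\{j:\ r_j^{(n)}-\Lambda_n^{-1}>\mu\}\ \le\ \#\{j:\ t_j^{(n)}>(1-\delta)\mu\}+l,
\]
with $l=l(\delta):=\max\bigl(l_+(\delta)+l_0,\ l_-(\delta)\bigr)$.

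Finally I would convert these counting-function estimates into estimates for the two monotone sequences, using that $\#\{j:\ r_j^{(n)}-\Lambda_n^{-1}>\mu\}\ge k$ if and only if $r_k^{(n)}-\Lambda_n^{-1}>\mu$, and the analogous equivalence for $\{t_j^{(n)}\}$. The left inequality yields $r_j^{(n)}-\Lambda_n^{-1}\ge t_{j+l}^{(n)}/(1+\delta)$ and the right one yields $r_j^{(n)}-\Lambda_n^{-1}\le t_{j-l}^{(n)}/(1-\delta)$, for every $j>l$ large enough that the quantities involved stay in $(0,\tau)$. Given $\varepsilon>0$, choosing $\delta$ so small that $(1-\delta)^{-1}\le1+\varepsilon$ and $(1+\delta)^{-1}\ge1-\varepsilon$, and then taking $l=l(\delta)$ and $j_0$ the corresponding threshold, gives precisely the bracketing in the statement.

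The step I expect to be the main obstacle is combining the gap Birman--Schwinger reduction with the comparison of counting functions. The surviving correction $Z_\pm$ in the reduced operator is genuinely not a norm-small perturbation of the leading term $-\mu^{-1}V^{1/2}P_nV^{1/2}$, so it cannot be absorbed into the multiplicative factors $1\pm\varepsilon$ directly; it has to be decomposed into a truly small piece ($\delta I$, responsible for $1\pm\varepsilon$) and a finite-rank piece (responsible for the fixed index shift $l$), and one must at the same time keep honest track of the bounded integer offset $l_0$ produced by the eigenvalues of $\widetilde R$ outside the cluster under study. The remaining ingredients — monotonicity of $X(\mu)$, the coincidence of the nonzero eigenvalues of $V^{1/2}P_nV^{1/2}$ and $T_n$, and the passage from counting functions to ordered sequences — are routine.
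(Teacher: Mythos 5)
The paper does not actually prove this lemma: it simply invokes \cite[Proposition~2.2]{puro}, so your proposal is supplying an argument where the text only gives a citation. What you write is essentially a correct reconstruction of the Pushnitski--Rozenblum argument: the gap Birman--Schwinger reduction of $\widetilde R=R_0+V$ at the energy $\Lambda_n^{-1}+\mu$, the splitting of $X(\mu)=V^{1/2}(R_0-\Lambda_n^{-1}-\mu)^{-1}V^{1/2}$ into the singular term $-\mu^{-1}V^{1/2}P_nV^{1/2}$ plus sign-definite bounded compact remainders, the observation that $V^{1/2}P_nV^{1/2}$ and $T_n=P_nVP_n$ share their nonzero spectrum, the ``$\delta I$ plus finite rank'' decomposition of the remainders, and the passage from counting functions back to the ordered sequences. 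Two points deserve tightening. First, you assert that the finite-rank parts $F_\pm$ can be chosen \emph{independently of} $\mu$; this is true but needs a reason --- e.g.\ $\mu\mapsto Z_+(\mu)$ is nondecreasing and $\mu\mapsto Z_-(\mu)$ is nonincreasing in the form sense on $(0,\tau)$, so one may bound $Z_+(\mu)\le Z_+(\tau)$ and $-Z_-(\mu)\le -Z_-(0)$ by \emph{fixed} compact positive operators and split those. Second, the opening ``identity'' with the offset $l_0$ is stated too rigidly: the Birman--Schwinger count in a gap matches the number of eigenvalue crossings, and the discrepancy with $\#\{j:\,r_j^{(n)}-\Lambda_n^{-1}>\mu\}$ is only \emph{uniformly bounded} in $\mu\in(0,\tau)$ (by the finitely many eigenvalues of $\widetilde R$ in $[\Lambda_n^{-1}+\tau,\Lambda_n^{-1}+2\tau)$, cf.\ Corollary~\ref{corl:proof-mainthm}), not exactly constant; a uniform bound is all the final index shift $l$ requires, so you should phrase it that way. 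With these repairs the argument is sound and self-contained, which is arguably an improvement over the bare citation in the text.
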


In all what follows, we work under the following additional
hypothesis:
\begin{itemize}
\item[(H2)] The function $\gamma\in L^\infty(\Gamma)$ satisfies
\begin{equation}\label{eq-hyp-gam}
\min_{x\in\Gamma}|\gamma(x)|>C_0\,,
\end{equation}
where $C_0$ is the geometric constant from Lemma~\ref{lem:RobDirch}.
\end{itemize}

Under the hypothesis (H2) above, the spectrum of $T_n$ will be
further related to the spectra of Toeplitz operators. Recall that
associated with a compact domain $U\subset\R^2$, we introduced in
\eqref{eq-toep} the Toeplitz operator $S_n^U$. We will prove the
following result.

\begin{lemma}\label{lem:toeplitztva}
Let $K_0\subset K \subset K_1$ be compact domains with ${\partial
K_i\cap \partial K=\varnothing}$. There exists a constant $C>1$ such
that
\begin{equation}\label{eq:inequality}
 \frac1C \langle f,S_n^{K_0}f\rangle \leq \langle f, T_n f \rangle \leq C \langle
 f,S_n^{K_1}f\rangle\quad\forall~f\in L^2(\R^2)\,.
\end{equation}
\end{lemma}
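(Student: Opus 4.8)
The plan is to exploit that $T_n=P_nVP_n$ and $S_n^{U}=P_n\chi_UP_n$ with $P_n$ the orthogonal projection onto $\mathcal L_n$, so that \eqref{eq:inequality} is equivalent to
\[
\tfrac1C\int_{K_0}|\phi|^2\ed x\ \le\ \langle\phi,V\phi\rangle\ \le\ C\int_{K_1}|\phi|^2\ed x\qquad\text{for all }\phi\in\mathcal L_n .
\]
Thus the whole problem is to control the quadratic form $\phi\mapsto\langle\phi,V\phi\rangle$ on the single Landau eigenspace $\mathcal L_n$. Two structural features of $\mathcal L_n$ will be decisive: its elements solve $L\phi=\Lambda_n\phi$ on all of $\R^2$ and decay like a Gaussian; and, up to the factor $e^{-b|z|^2/4}$, they are polyanalytic, hence rigid in the sense of the identity theorem. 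Also $\Gamma=\partial K$ is a single smooth Jordan curve because $K$ is simply connected.

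The first step is to reduce $\langle\phi,V\phi\rangle$ to a boundary quadratic form. For $\phi\in\mathcal L_n$ one has $R_0\phi=\Lambda_n^{-1}\phi$, so Lemma~\ref{lem:V} gives $\langle\phi,V\phi\rangle=\Lambda_n^{-1}\int_\Gamma\partial_\Gamma\phi\cdot\overline{(v_\Omega-v_K)}\ed S$ with $v=\widetilde R\phi$. The functions $w_\Omega:=v_\Omega-\Lambda_n^{-1}\phi$ and $w_K:=v_K-\Lambda_n^{-1}\phi$ are $L$-harmonic in $\Omega$ respectively $K$ (because $Lv_\bullet=\phi$ there and $L\phi=\Lambda_n\phi$), $w_\Omega$ decays at infinity, and $\partial_\Gamma w_\Omega=\partial_\Gamma w_K=-\Lambda_n^{-1}\partial_\Gamma\phi$ on $\Gamma$. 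Applying Lemma~\ref{lem:RobDirch} to $w_\Omega$ and $w_K$ — legitimate under (H2) — one obtains
\[
(v_\Omega-v_K)\big|_\Gamma=\Lambda_n^{-1}\bigl[(B+(\gamma-\tfrac12)I)^{-1}-(B+(\gamma+\tfrac12)I)^{-1}\bigr]A\,\partial_\Gamma\phi=:\Lambda_n^{-1}\,MA\,\partial_\Gamma\phi ,
\]
hence $\langle\phi,V\phi\rangle=\Lambda_n^{-2}\langle MA\,\partial_\Gamma\phi,\partial_\Gamma\phi\rangle_{L^2(\Gamma)}$. The two resolvents commute (their difference is $I$), so $M=[(B+\gamma I)^2-\tfrac14 I]^{-1}$; since $\Gamma$ is connected, (H2) forces $\gamma$ to have constant sign, and together with $C_0>\|B\|+1$ this makes $B+\gamma I$ and $(B+\gamma I)^2-\tfrac14 I$ invertible, so $M$ is bounded and invertible on $L^2(\Gamma)$ with positive principal symbol $(\gamma^2-\tfrac14)^{-1}$. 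Combined with the fact that $A$ is elliptic of order $-1$ with positive principal symbol (Lemma~\ref{lem:pseudo} and $G_0(x,y)\sim\tfrac1{2\pi}\ln\tfrac1{|x-y|}$), this shows that $MA$ is an elliptic pseudodifferential operator of order $-1$ with positive principal symbol; as in addition $\langle MA\,\partial_\Gamma\phi,\partial_\Gamma\phi\rangle=\Lambda_n^2\langle\phi,V\phi\rangle\ge0$, the form $\langle\phi,V\phi\rangle$ is, on $\mathcal L_n$, comparable to $\|\partial_\Gamma\phi\|_{H^{-1/2}(\Gamma)}^2$.

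For the upper bound, boundedness of $MA$ on $L^2(\Gamma)$ gives $\langle\phi,V\phi\rangle\le C\|\partial_\Gamma\phi\|_{L^2(\Gamma)}^2=C\|\partial_N\phi+\gamma\phi\|_{L^2(\Gamma)}^2$, which by the trace theorem is $\lesssim\|\phi\|_{H^2(\mathcal C)}^2$ on a thin collar $\mathcal C$ of $\Gamma$ and, by interior elliptic (Caccioppoli) estimates for $L\phi=\Lambda_n\phi$, is $\lesssim\|\phi\|_{L^2(\mathcal C')}^2$ on a slightly larger collar $\mathcal C'$. Since $K\subset K_1$ with $\partial K_1\cap\partial K=\varnothing$, the curve $\Gamma$ lies in the interior of $K_1$, so $\mathcal C'\subset K_1$ when $\mathcal C'$ is thin enough; this gives the right-hand inequality of \eqref{eq:inequality}.

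The lower bound is the crux, and this is where the polyanalytic rigidity of $\mathcal L_n$ and the size condition (H2) enter in an essential way. Ellipticity of $MA$ (Gårding's inequality) together with injectivity of the map $\phi\mapsto\partial_\Gamma\phi$ on $\mathcal L_n$ gives $\langle\phi,V\phi\rangle\gtrsim\|\partial_\Gamma\phi\|_{H^{-1/2}(\Gamma)}^2$; using that a function in $\mathcal L_n$ is determined by its Dirichlet trace on $\Gamma$, and that under (H2) the resulting boundary operator taking $\phi|_\Gamma$ to $\partial_\Gamma\phi$ is an invertible elliptic operator of order $1$, this is upgraded to $\langle\phi,V\phi\rangle\gtrsim\|\phi|_\Gamma\|_{H^{-1/2}(\Gamma)}^2$. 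Finally, since $K_0\subset K$ with $\partial K_0\cap\partial K=\varnothing$, the set $K_0$ lies at positive distance inside the Jordan domain bounded by $\Gamma$; writing $\phi=e^{-b|z|^2/4}\cdot(\text{polyanalytic})$ and applying the Cauchy integral formula over $\Gamma$, whose kernel is smooth on $\Gamma$ for points of $K_0$, one gets $\|\phi\|_{L^2(K_0)}\lesssim\|\phi|_\Gamma\|_{H^{-1/2}(\Gamma)}$, and combining the three inequalities yields the left-hand inequality of \eqref{eq:inequality}. I expect the main difficulty to lie precisely here: verifying that (H2) rules out the resonant configuration in which some nonzero $\phi\in\mathcal L_n$ restricts simultaneously to a $\Lambda_n$-eigenfunction of $L_\Omega$ and of $L_K$ (which is exactly what is needed for injectivity of $\phi\mapsto\partial_\Gamma\phi$ on $\mathcal L_n$ and hence for the Gårding lower bound), and — for $n\ge2$ — establishing the polyanalytic estimate $\|\phi\|_{L^2(K_0)}\lesssim\|\phi|_\Gamma\|_{H^{-1/2}(\Gamma)}$ using the full polyanalytic rather than merely analytic structure of $\mathcal L_n$.
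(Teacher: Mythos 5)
Your first two steps track the paper closely: the identity $\langle\phi,V\phi\rangle=\Lambda_n^{-2}\langle MA\,\partial_\Gamma\phi,\partial_\Gamma\phi\rangle_{L^2(\Gamma)}$ with $M=T_{B,-}^{-1}T_{B,+}^{-1}$ is exactly the content of Lemma~\ref{lem:Tn} (the paper packages the same object as the order-$1$ operator $T=T_{B,-}A^{-1}T_{B,+}^{-1}$ acting on $\phi|_\Gamma$ rather than an order-$(-1)$ operator acting on the Robin data), and your upper bound via the trace theorem plus interior elliptic estimates for $L\phi=\Lambda_n\phi$ on a thin collar of $\Gamma$ inside $K_1$ is in substance the paper's Step~2.

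The lower bound, however, contains a genuine gap: you route it through the Dirichlet trace, claiming that $\phi\in\mathcal L_n$ is determined by $\phi|_\Gamma$, that $\phi|_\Gamma\mapsto\partial_\Gamma\phi$ is a well-defined invertible elliptic operator of order $1$, and that $\|\phi\|_{L^2(K_0)}\lesssim\|\phi|_\Gamma\|_{H^{-1/2}(\Gamma)}$ by a Cauchy-type integral over $\Gamma$. All of this fails for $n\ge 2$, and no hypothesis on $\gamma$ can rescue it since it does not involve $\gamma$ at all: the radial function $\phi_0=e^{-b|z|^2/4}\bigl(1-\tfrac{b}{2}|z|^2\bigr)$ lies in $\mathcal L_2$ and vanishes identically on the circle $|z|=\sqrt{2/b}$ (more generally, the radial element of $\mathcal L_n$ is $e^{-b|z|^2/4}$ times the Laguerre polynomial of degree $n-1$ in $b|z|^2/2$ and vanishes on $n-1$ circles). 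Taking $K$ to be the disc bounded by such a circle gives $0\ne\phi_0\in\mathcal L_n$ with $\phi_0|_\Gamma\equiv 0$ but $\phi_0\ne 0$ on $K_0$: the Dirichlet trace is not injective on $\mathcal L_n$, the map $\phi|_\Gamma\mapsto\partial_\Gamma\phi$ is not well defined, and $\|\phi\|_{L^2(K_0)}\lesssim\|\phi|_\Gamma\|$ is false. (This is exactly the point where polyanalyticity differs from analyticity: $1-|z|^2$ is polyanalytic of order $2$ and vanishes on a circle, so the ``rigidity'' you invoke is not available.) A second, independent problem is the G{\aa}rding step: for the elliptic nonnegative-symbol operator $MA$ one only gets $\langle MAg,g\rangle\ge c\|g\|_{H^{-1/2}}^2-C\|g\|_{H^{-1}}^2$, and the compact error cannot be absorbed by an injectivity statement on the \emph{infinite-dimensional} subspace $\{\partial_\Gamma\phi:\phi\in\mathcal L_n\}$; one needs strict positivity of the boundary operator on all of $L^2(\Gamma)$, which is what the paper asserts in Lemma~\ref{lem:Tn2} ($T$ elliptic \emph{and} invertible, whence $\langle f,T_nf\rangle\asymp\|P_nf\|_{L^2(\Gamma)}\|P_nf\|_{H^1(\Gamma)}$ for all $f$). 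For the interior estimate the paper then uses a layer-potential representation \eqref{eq:fhelp} built from the Green kernel of $L-\Lambda_n$, whose kernel is smooth for $x\in K_0$ at positive distance from $\Gamma$, to get $\|u\|_{L^2(K_0)}\le C\|u\|_{L^2(\Gamma)}$; if you keep your architecture, the interior bound must likewise be run through the full Cauchy (or Robin) data of $\phi$ on $\Gamma$ via the Green representation formula for $(L-\Lambda_n)\phi=0$ in $K^\circ$, never through the Dirichlet trace alone.
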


The proof of Lemma~\ref{lem:toeplitztva} is by reduction of the
operator $T_n$ to a pseudo-differential operator on the common
boundary $\Gamma$ of $\Omega$ and $K$. We will give the proof in the
next section, but we give first the proof of
\eqref{eq-thm-resolvent}.
\begin{cor}\label{cor:thm-resolvent}
Assume the hypotheses (H1) and (H2) above hold. Then the claim in
\eqref{eq-thm-resolvent} above is true.
\end{cor}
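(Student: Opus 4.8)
The plan is to chain Lemmas~\ref{lem:RT}, \ref{lem:toeplitztva} and~\ref{lem:toepspectrum} together, using the min-max principle to turn the quadratic-form bound of Lemma~\ref{lem:toeplitztva} into eigenvalue bounds, and then letting the approximating domains $K_0,K_1$ close in on $K$.

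First I would exploit \eqref{eq:inequality}. Since $T_n$, $S_n^{K_0}$ and $S_n^{K_1}$ are compact positive operators, the form inequality together with the variational characterization of eigenvalues gives $\tfrac1C\,s_j^{(n)}(K_0)\le t_j^{(n)}\le C\,s_j^{(n)}(K_1)$ for every $j\ge1$, where $s_j^{(n)}(U)$ and $t_j^{(n)}$ denote the decreasing eigenvalue sequences of $S_n^U$ and $T_n$ respectively; in particular all $t_j^{(n)}>0$, since otherwise this bound would force some $s_j^{(n)}(K_0)=0$, contradicting Lemma~\ref{lem:toepspectrum}. Multiplying by $j!$, extracting $j$-th roots and letting $j\to\infty$, the factors $C^{\pm1/j}\to1$ disappear, so Lemma~\ref{lem:toepspectrum} yields
\[
\tfrac b2\big(\kap(K_0)\big)^2\le\liminf_{j\to\infty}\big(j!\,t_j^{(n)}\big)^{1/j}\le\limsup_{j\to\infty}\big(j!\,t_j^{(n)}\big)^{1/j}\le\tfrac b2\big(\kap(K_1)\big)^2 .
\]
Using that $\Gamma$ is smooth, I would then take simply connected smooth domains $K_0\subset K\subset K_1$ with $\partial K_i\cap\partial K=\varnothing$ shrinking down to $K$ (for instance the regions enclosed by the curves at signed normal distance $\pm t$ from $\Gamma$, $t\downarrow0$). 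Along such monotone smooth families the logarithmic capacity is continuous, $\kap(K_0)\to\kap(K)$ and $\kap(K_1)\to\kap(K)$, so the squeeze above forces $\lim_{j\to\infty}\big(j!\,t_j^{(n)}\big)^{1/j}=\tfrac b2\big(\kap(K)\big)^2$.

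Finally I would feed this into Lemma~\ref{lem:RT} to pass from $t_j^{(n)}$ to $r_j^{(n)}-\LL_n^{-1}$. For fixed $\varepsilon>0$, with $l$ and $j_0$ as in that lemma, the inequalities $(1-\varepsilon)t_{j+l}^{(n)}\le r_j^{(n)}-\LL_n^{-1}\le(1+\varepsilon)t_{j-l}^{(n)}$ survive multiplication by $j!$ and extraction of the $j$-th root. Letting $j\to\infty$ one checks that $(1\pm\varepsilon)^{1/j}\to1$, that $\big(j!/(j\pm l)!\big)^{1/j}\to1$ (the ratio is polynomial in $j$), and that $\big((j\pm l)!\,t_{j\pm l}^{(n)}\big)^{1/j}\to\tfrac b2(\kap(K))^2$ (raise $\big((j\pm l)!\,t_{j\pm l}^{(n)}\big)^{1/(j\pm l)}$ to the power $(j\pm l)/j\to1$); writing $j!\,t_{j\pm l}^{(n)}=\big(j!/(j\pm l)!\big)\,\big((j\pm l)!\,t_{j\pm l}^{(n)}\big)$ then shows both outer terms converge to $\tfrac b2(\kap(K))^2$, which is exactly \eqref{eq-thm-resolvent}.

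I expect the only genuinely delicate point to be the passage $K_0\to K$, $K_1\to K$: one must know that $\kap$ is continuous under smooth exhaustions from inside and contractions from outside (equivalently, the monotone continuity of logarithmic capacity for increasing, resp.\ decreasing, sequences of compact sets). This is classical potential theory, and everything else is a routine limiting manipulation that became available only once Lemma~\ref{lem:toeplitztva} had reduced $T_n$ to honest Toeplitz operators.
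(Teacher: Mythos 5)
Your proposal is correct and follows essentially the same route as the paper: min--max turns the form inequality of Lemma~\ref{lem:toeplitztva} into eigenvalue bounds $\tfrac1C s_{j,K_0}^{(n)}\leq t_j^{(n)}\leq C s_{j,K_1}^{(n)}$, Lemma~\ref{lem:toepspectrum} plus the squeeze (letting $K_0,K_1$ tend to $K$ and using continuity of the logarithmic capacity) identifies $\lim_j (j!\,t_j^{(n)})^{1/j}$, and Lemma~\ref{lem:RT} transfers this to $r_j^{(n)}-\Lambda_n^{-1}$. Your treatment of the shift $j\mapsto j\pm l$ and of the constants $C^{\pm 1/j}$, $(1\pm\varepsilon)^{1/j}$ is more explicit than the paper's, but the argument is the same.
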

\begin{proof}
Invoking the variational min-max principle, the result of
Lemma~\ref{lem:toeplitztva} provides us with a sufficiently large
integer $j_0\in\mathbb N$ such that, for all $j\geq j_0$, we have,
\begin{displaymath}
\frac1C s_{j,K_0}^{(n)}\leq t_j^{(n)}\leq C s_{j,K_1}^{(n)}\,.
\end{displaymath}
Here $\{s_{j,K_0}^{(n)}\}_j$ and $\{s_{j,K_1}^{(n)}\}_j$ are the
decreasing sequences of $S_n^{K_0}$ and $S_n^{K_1}$ respectively.
Implementing the result of Lemma~\ref{lem:toepspectrum} in the
inequality above, we get
\begin{displaymath}
\frac{b}2\left(\text{Cap}(K_0)\right)^2
\leq\lim_{j\to\infty}\left(j!t_j^{(n)}\right)^{1/j}
\leq\frac{b}2\left(\text{Cap}(K_1)\right)^2.
\end{displaymath}
Since both $K_0\subset
K\subset K_1$ are arbitrary, we get by making them close to $K$,
\begin{displaymath}
\lim_{j\to\infty}\left(j!t_j^{(n)}\right)^{1/j}=\frac{b}2
\left(\text{Cap}(K)\right)^2\,.
\end{displaymath}
Implementing the above asymptotic limit in the
estimate of Lemma~\ref{lem:RT}, we get the announced result in
Corollary~\ref{cor:thm-resolvent} above.
\end{proof}

Summing up the results of Corollaries~\ref{corl:proof-mainthm} and
\ref{cor:thm-resolvent}, we end up with the proof of
Theorem~\ref{thm-KP} provided the hypotheses (H1) and (H2) are
verified. As we explained earlier, the hypothesis (H1) can be
eliminated by shifting the operator by a sufficiently large positive
constant. On the other hand, Theorem~\ref{thm-KP} is stated under
the hypothesis (H2) on the function $\gamma$.

So all what we need now is to prove Lemma~\ref{lem:toeplitztva}.
That will be the subject of the next section.

\subsection{Reduction to a boundary pseudo-differential operator}

We start with the following reduction of the operator $T_n$ from
\eqref{eq-Tn}.

\begin{lemma}\label{lem:Tn}
Under the hypothesis (H2) above, for all $f,g\in L^2(\R^2)$, it
holds that,
\begin{equation}\label{eq-lem:Tn}
\langle f,T_n g\rangle=\frac{1}{\Lambda_n^2}\int_\Gamma
(P_nf)\cdot\overline{T((P_ng))}\ed S\,.
\end{equation}
Here,
\begin{gather*}
T=T_{B,-}A^{-1}T_{B,+}^{-1}\,,\\
T_{B,\pm}=
\left(B+\left(\gamma\pm\frac12\right)I\right)\,,
\end{gather*}
and $A$, $B$ the
operators from \eqref{eq-boundaryop}.
\end{lemma}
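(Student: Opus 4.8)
The plan is to transport the computation of $\langle f,T_ng\rangle=\langle P_nf,VP_ng\rangle$ to the boundary $\Gamma$ using formula \eqref{eq-V} for $V$ together with the Robin--to--Dirichlet dictionary of Lemma~\ref{lem:RobDirch}. The first ingredient is that $P_nf$ is a Landau eigenfunction, so $R_0(P_nf)=L^{-1}(P_nf)=\Lambda_n^{-1}P_nf$. Applying \eqref{eq-V} to the pair $(P_nf,P_ng)$ with $u=R_0(P_nf)=\Lambda_n^{-1}P_nf$ and $v=\widetilde R(P_ng)$ then gives
\[
\langle f,T_ng\rangle=\frac1{\Lambda_n}\int_\Gamma \partial_\Gamma(P_nf)\cdot\overline{(v_\Omega-v_K)}\,\ed S .
\]

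The next step is to express the jump $(v_\Omega-v_K)|_\Gamma$ through $P_ng$. The key point is that $w_\Omega:=(VP_ng)_\Omega=v_\Omega-\Lambda_n^{-1}(P_ng)|_\Omega$ satisfies $(\nabla-ib\Ab_0)^2w_\Omega=0$ in $\Omega$ (because $L_\Omega v_\Omega=(P_ng)|_\Omega$ while $L(P_ng)=\Lambda_n P_ng$), belongs to $L^2(\Omega)$ and hence decays at infinity, and similarly $w_K:=(VP_ng)_K$ is a null solution of the Landau operator in $K$. Since $v_\Omega\in D(L_\Omega)$ and $v_K\in D(L_K)$ one has $\partial_\Gamma v_\Omega=\partial_\Gamma v_K=0$, and as $P_ng$ is smooth across $\Gamma$ this gives $\partial_\Gamma w_\Omega=\partial_\Gamma w_K=-\Lambda_n^{-1}\partial_\Gamma(P_ng)$. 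Feeding $w_\Omega$ and $w_K$ into Lemma~\ref{lem:RobDirch}, subtracting, observing that $\Lambda_n^{-1}R_0(P_ng)$ is smooth and does not jump across $\Gamma$, and using $T_{B,+}-T_{B,-}=I$ (whence $T_{B,-}^{-1}-T_{B,+}^{-1}=T_{B,-}^{-1}T_{B,+}^{-1}$), one obtains $(v_\Omega-v_K)|_\Gamma=\Lambda_n^{-1}T_{B,-}^{-1}T_{B,+}^{-1}A\,\partial_\Gamma(P_ng)$, hence
\[
\langle f,T_ng\rangle=\frac1{\Lambda_n^2}\int_\Gamma \partial_\Gamma(P_nf)\cdot\overline{T_{B,-}^{-1}T_{B,+}^{-1}A\,\partial_\Gamma(P_ng)}\,\ed S .
\]

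It remains to trade the conormal data $\partial_\Gamma(P_nf),\partial_\Gamma(P_ng)$ for the Dirichlet traces $(P_nf)|_\Gamma,(P_ng)|_\Gamma$. For this I would run the Green representation underlying Lemma~\ref{lem-green} on the Landau eigenfunctions themselves, on each side of $\Gamma$: since $L(P_ng)=\Lambda_nP_ng\ne 0$ the representation carries a bulk term equal to $\Lambda_n$ times the volume potential of $P_ng$ over $\Omega$, resp.\ $K$, and the two volume potentials add up to $\Lambda_nR_0(P_ng)=P_ng$; this cancellation is what converts the two representation identities into a Robin--to--Dirichlet relation on $\mathcal L_n$ of the form $A\,\partial_\Gamma(P_ng)=T_{B,-}(P_ng)|_\Gamma$ (and symmetrically $A\,\partial_\Gamma(P_nf)=T_{B,-}(P_nf)|_\Gamma$, etc.\ on the other slot). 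Substituting this and using that $A$ is self-adjoint on $L^2(\Gamma)$ and that $\gamma$ is real, the three boundary operators assemble into $T=T_{B,-}A^{-1}T_{B,+}^{-1}$, which proves \eqref{eq-lem:Tn}.

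The main obstacle is precisely this last step: performing the Robin--to--Dirichlet reduction on $\mathcal L_n$ while keeping exact track of the $\pm\tfrac12$ jumps of the double layer $\mathcal B$ and of the contributions of $\gamma$, so that $A$, $B$ and the behaviour of $B$ under transposition combine into exactly the operator $T$. A secondary point to be checked is the rate of decay at infinity of the $L^2$ null solutions $w_\Omega$ in the exterior domain, which is what legitimizes the $\Omega$-part of Lemma~\ref{lem:RobDirch} (its representation formula implicitly discards a contribution from infinity). The remaining steps are routine once Lemmas~\ref{lem:V}, \ref{lem-green}, \ref{lem:RobDirch} and the eigenfunction identity $R_0P_n=\Lambda_n^{-1}P_n$ are in hand.
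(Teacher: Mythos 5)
Your proposal follows essentially the same route as the paper's proof: apply \eqref{eq-V} to the pair $(P_nf,P_ng)$, use $R_0P_n=\Lambda_n^{-1}P_n$, insert $w=\Lambda_n^{-1}P_ng$ so that $v_\Omega-w$ and $w-v_K$ (which are null solutions of $L$ in $\Omega$ and $K$) are handled by Lemma~\ref{lem:RobDirch} together with $\partial_\Gamma v_\Omega=\partial_\Gamma v_K=0$, combine the two inverses via $T_{B,+}-T_{B,-}=I$, and finally trade the conormal data of $P_nf$ and $P_ng$ for their Dirichlet traces. The ``main obstacle'' you flag is dispatched in the paper simply by invoking Lemma~\ref{lem:RobDirch} once more for $u=\Lambda_n^{-1}P_nf$ and $w=\Lambda_n^{-1}P_ng$ (the lemma as stated requires only $u_\Omega\in H^1_{\mathrm{loc}}(\Omega)$ and $u_K\in H^1(K)$), so no separate volume-potential cancellation is carried out there.
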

\begin{proof}
Recall that $R_0=L^{-1}$ is the resolvent of the Landau Hamiltonian,
$\widetilde R_0$ that of the Hamiltonian $\widetilde
L=L_\Omega\oplus L_K$. We denote by $u=R_0P_nf=\Lambda_n^{-1}P_nf$,
$v=\widetilde R P_ng=v_\Omega\oplus v_K$ and
$w=R_0P_ng=\Lambda_n^{-1}P_ng$. Notice that
\begin{displaymath}
\langle f,T_ng\rangle=\langle P_nf,VP_ng\rangle
\end{displaymath}
where $V$ is the operator from \eqref{eq-V}. Invoking
Lemma~\ref{lem:V}, we write,
\begin{align*}
\langle P_nf,VP_ng\rangle &= \int_\Gamma\partial_\Gamma
u\cdot\overline{(v_\Omega-v_K)}\ed S\\
&=\int_\Gamma\partial_\Gamma
u\cdot\overline{(v_\Omega-w+w-v_K)}\ed S
\end{align*}
Using Lemma~\ref{lem:RobDirch}, we can write further,
\begin{displaymath}
\langle P_nf,VP_ng\rangle=\int_\Gamma\partial_\Gamma
u\cdot\overline{(T_{B,-}^{-1}A(\partial_\Gamma
(v_\Omega-w))+T_{B,+}^{-1}A(\partial\Gamma(w-v_K)))}\ed S.
\end{displaymath}
Notice
that $v_\Omega$ and $v_K$ are in the domain of the operators
$L_\Omega$ and $L_K$ respectively, hence $\partial_\Gamma
v_\Omega=\partial_\Gamma v_K=0$. Consequently,
\begin{equation}\label{eq-lem-RD}\langle P_nf,VP_ng\rangle=
\int_\Gamma\partial_\Gamma
u\cdot\overline{(T_{B,+}^{-1}-T_{B,+}^{-1})A(\partial_\Gamma
w))}\ed S.\end{equation} Lemma~\ref{lem:RobDirch} also gives,
\begin{displaymath}
\partial_\Gamma
w=A^{-1}T_{B,-}w\,,\quad \partial_\Gamma u=A^{-1}T_{B,-}u\,.
\end{displaymath}
Inserting this in \eqref{eq-lem-RD}, we get by a simple calculation,
\begin{displaymath}
\langle P_nf,VP_ng\rangle=
\int_\Gamma\partial_\Gamma u\cdot\overline{Tw}\ed S\,,
\end{displaymath}
where $T$ is the operator introduced in Lemma~\ref{lem:Tn} above.
Recalling that $u=R_0P_nf=\Lambda_n^{-1}P_nf$ and
$w=R_0P_ng=\Lambda_n^{-1}P_ng$, we get the identity announced in
Lemma~\ref{lem:Tn} above.
\end{proof}

\begin{lemma}\label{lem:Tn2}
There exists a constant $C>1$, for all $f\in L^2(\R^2)$ it holds
that,
\begin{displaymath}
\frac1C\|P_nf\|_{L^2(\Gamma)}\|P_nf\|_{H^1(\Gamma)}\leq \langle
f,T_nf\rangle\leq C\|P_nf\|_{L^2(\Gamma)}\|P_nf\|_{H^1(\Gamma)}\,.
\end{displaymath}
Here $P_n$ is the orthogonal projection on the Landa level $\mathcal L_n$.
\end{lemma}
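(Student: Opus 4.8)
The plan is to push the form $\langle f,T_nf\rangle$ down to the boundary curve $\Gamma$ by means of Lemma~\ref{lem:Tn} and then read off the two‑sided bound from the mapping properties of the boundary operators $A$ and $B$. Set $\varphi=(P_nf)|_\Gamma$; since $\mathcal L_n$ consists of smooth functions, $\varphi\in C^\infty(\Gamma)\subset H^1(\Gamma)$, and every operator below is applied to $\varphi$ without ambiguity. By Lemma~\ref{lem:Tn}, $\langle f,T_nf\rangle=\Lambda_n^{-2}\langle T\varphi,\varphi\rangle_{L^2(\Gamma)}$ with $T=T_{B,-}A^{-1}T_{B,+}^{-1}$, and since $T_n=P_nVP_n$ is self‑adjoint and non‑negative by Lemma~\ref{lem:V}, this number is real and $\ge 0$; so it suffices to bound $\langle T\varphi,\varphi\rangle_{L^2(\Gamma)}$ above and below by a fixed multiple of $\|\varphi\|_{L^2(\Gamma)}\|\varphi\|_{H^1(\Gamma)}$. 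I would first record the relevant facts: by Lemma~\ref{lem:pseudo}, $A\colon L^2(\Gamma)\to H^1(\Gamma)$ is an isomorphism, hence so is $A^{-1}\colon H^1(\Gamma)\to L^2(\Gamma)$; by Lemma~\ref{lem:pseudo} and Remark~\ref{rem:kernel}, $B$ is a pseudodifferential operator of order $-1$, so $T_{B,\pm}=B+(\gamma\pm\tfrac12)I$ is a pseudodifferential operator of order $0$, bounded on $L^2(\Gamma)$ and on $H^1(\Gamma)$; and by the choice of $C_0$ in Lemma~\ref{lem:RobDirch} together with (H2), $0\notin\sigma(T_{B,\pm})$, so $T_{B,\pm}^{-1}$ is again a pseudodifferential operator of order $0$, bounded on $L^2(\Gamma)$ and on $H^1(\Gamma)$.

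For the upper bound I would rewrite $\langle T\varphi,\varphi\rangle_{L^2(\Gamma)}=\bigl\langle A^{-1}(T_{B,+}^{-1}\varphi),\,T_{B,-}^{*}\varphi\bigr\rangle_{L^2(\Gamma)}$ and pair the two slots against the two different norms: the first satisfies $\|A^{-1}(T_{B,+}^{-1}\varphi)\|_{L^2(\Gamma)}\le C\|T_{B,+}^{-1}\varphi\|_{H^1(\Gamma)}\le C\|\varphi\|_{H^1(\Gamma)}$, while the second satisfies $\|T_{B,-}^{*}\varphi\|_{L^2(\Gamma)}\le C\|\varphi\|_{L^2(\Gamma)}$, so Cauchy–Schwarz in $L^2(\Gamma)$ gives $\langle T\varphi,\varphi\rangle\le C\|\varphi\|_{L^2(\Gamma)}\|\varphi\|_{H^1(\Gamma)}$.

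The lower bound is the real difficulty. The operator $T=T_{B,-}A^{-1}T_{B,+}^{-1}$ is elliptic of order $1$ on $\Gamma$; since $A$ is self‑adjoint and positive (it is the boundary compression of $R_0=L^{-1}>0$), its principal symbol $\sigma_{-1}(A)$ is strictly positive, and since $|\gamma|>C_0>1$ we have $\tfrac{\gamma-1/2}{\gamma+1/2}>0$ on all of $\Gamma$, so the principal symbol of $T$ is a strictly positive function of $x$ times $|\xi|$ (strict positivity of this symbol, like the invertibility of $T_{B,\pm}$, relies on $|\gamma|$ being large). Gårding's inequality then yields $\langle T\varphi,\varphi\rangle\ge c\|\varphi\|_{H^{1/2}(\Gamma)}^{2}-C\|\varphi\|_{L^2(\Gamma)}^{2}$. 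To upgrade this to the asserted bound one must, on one side, absorb the $L^2$‑error — using $\langle T\varphi,\varphi\rangle\ge0$ and $\|\varphi\|_{L^2}\le\|\varphi\|_{H^{1/2}}$ to isolate a low‑frequency regime, where the three Sobolev norms of $\varphi$ are comparable and a compactness argument (as in the Neumann case of \cite{P}) yields a positive lower bound, from a high‑frequency regime, where Gårding suffices — and, on the other side, compare $\|\varphi\|_{H^{1/2}(\Gamma)}^{2}$ with $\|\varphi\|_{L^2(\Gamma)}\|\varphi\|_{H^1(\Gamma)}$ for $\varphi$ that is the boundary trace of a solution of the elliptic equation $(L-\Lambda_n)u=0$ near $\Gamma$, which is done through interior elliptic estimates and the trace theorem. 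It is precisely this last comparison — controlling the boundary Sobolev norms of an element of the infinite‑dimensional space $\mathcal L_n$ by its interior behaviour, with constants independent of the element — that I expect to be the main obstacle and to require the most care.
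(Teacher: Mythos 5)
Your reduction to the boundary via Lemma~\ref{lem:Tn} and your treatment of the \emph{upper} bound are correct and in fact more carefully argued than the paper's: writing $\langle T\varphi,\varphi\rangle=\langle A^{-1}T_{B,+}^{-1}\varphi,\,T_{B,-}^{*}\varphi\rangle$ and pairing the $H^1\to L^2$ bound for $A^{-1}$ against the $L^2$-boundedness of $T_{B,-}^{*}$ gives the right-hand inequality cleanly. The computation of the principal symbol of $T$ and the role of $|\gamma|>C_0$ in making $T_{B,\pm}$ invertible are also accurate.

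The problem is the lower bound, which you do not actually prove. G\aa rding's inequality leaves you with $\langle T\varphi,\varphi\rangle\ge c\|\varphi\|_{H^{1/2}(\Gamma)}^{2}-C\|\varphi\|_{L^2(\Gamma)}^{2}$, and you then list two unresolved steps: absorbing the $-C\|\varphi\|_{L^2}^2$ term, and passing from $\|\varphi\|_{H^{1/2}}^{2}$ up to $\|\varphi\|_{L^2}\|\varphi\|_{H^1}$. The second step is a genuine obstruction, not just a technicality: for general $\varphi\in H^1(\Gamma)$ one only has the interpolation inequality $\|\varphi\|_{H^{1/2}}^{2}\le\|\varphi\|_{L^2}\|\varphi\|_{H^1}$, and the reverse inequality fails (take $\varphi=1+k^{-1/2}e^{ik\theta}$ on a circle), so no amount of symbol positivity alone can yield the product-of-norms lower bound for arbitrary $\varphi$; one must either exploit that $\varphi$ is the trace of an element of $\mathcal L_n$, or argue as the paper does. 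The paper's proof takes a shorter route: it observes that $T$ is an \emph{invertible} elliptic pseudo-differential operator of order $1$ (invertibility coming from Lemma~\ref{lem:pseudo} applied to $A$, plus the invertibility of $T_{B,\pm}$ under (H2)) and deduces the two-sided bound $\frac1C\|u\|_{L^2(\Gamma)}\|u\|_{H^1(\Gamma)}\le\langle u,Tu\rangle\le C\|u\|_{L^2(\Gamma)}\|u\|_{H^1(\Gamma)}$ directly for all $u\in H^1(\Gamma)$, rather than via G\aa rding plus interpolation. Your suspicion that something more than ellipticity is needed for the lower bound is well founded, but as submitted your proposal stops exactly where the lemma's content lies, so it is incomplete.
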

\begin{proof}
Lemma~\ref{lem:pseudo} says that $A$ is an elliptic
pseudo-differential operator of order $-1$, hence $A^{-1}$ is a
pseudo-differential operator of order $1$. On the other hand,
$B$ is also a pseudo-differential operator of order $-1$ and
$\gamma\in C^\infty(\Gamma)$, hence
$T_{B,\pm}$ and $T_{B,\pm}^{-1}$ are pseudo-differential operators
with order $0$. Therefore, the operator $T$ from Lemma~\ref{lem:Tn} is
a pseudo-differential operator with order $1$. Invoking again
Lemma~\ref{lem:pseudo}, $T$ is invertible and therefore, there exists
a constant $C>1$ such that,
$$\frac1C\|u\|_{L^2(\Gamma)}\|u\|_{H^1(\Gamma)}\leq \langle
u,Tu\rangle\leq C\|u\|_{L^2(\Gamma)}\|u\|_{H^1(\Gamma)}\,,$$
for all $f\in H^1(\Gamma)$. Applying the above estimate with $u=P_nf$
and $f\in L^2(\R^2)$, and recalling \eqref{eq-lem:Tn}, we get
the double inequality announced in
the above lemma holds for all $f\in\mathcal S$.
\end{proof}

\begin{proof}[Proof of Lemma~\ref{lem:toeplitztva}]\ \\
{\it Step~1. Lower bound.} We prove that  the lower bound in
\eqref{eq:inequality} is valid for all $f\in L^2(\R^2)$.  Let
$u=P_nf$, with $P_n$ the orthogonal projection on the eigenspace
$\mathcal L_n$ associated with the Landau level $\Lambda_n$.  By the
definition of $T_n$ from \eqref{eq-Tn}, the estimate of
Lemma~\ref{lem:Tn2} gives,
\begin{displaymath}
\langle f,T_nf\rangle\geq
\frac1C\|u\|_{L^2(\Gamma)}\|u\|_{H^1(\Gamma)}\,.
\end{displaymath}
So it suffices to
prove that
\begin{displaymath}
\langle f,S_n^{K_0}f\rangle\leq
C'\|u\|_{L^2(\Gamma)}\|u\|_{H^1(\Gamma)}\,,
\end{displaymath}
for some positive constant $C'$. Recalling the
definition of $S_n^{K_0}$, this is equivalent to showing that
\begin{equation}\label{eq-lem-ppp}
\|u\|_{L^2(K_0)}\leq C'\|u\|_{L^2(\Gamma)}\|u\|_{H^1(\Gamma)}\,.
\end{equation}
Notice that $L_nu=0$, where $L_n=L-\Lambda_nI$. Let us denote by
$E(x,y)$ the Green's potential of the operator $L_n$. Then $E$ is
smooth away from the diagonal $x=y$ and decays logarithmically
near the diagonal in the same way described in
Lemma~\ref{lem:kernel} (see Stampacchia \cite{Stamp}).

Let $B_n$ be the double layer operator evaluated at the boundary,
i.e. for any $\alpha\in H^1_{\text{loc}}(\R^2)$,
\begin{equation*}
 B_n\alpha(x) = \int_\Gamma (\partial_N)_y E(x,y)\alpha(y)\,d S(y),\quad x\in\Gamma.
\end{equation*}
Here we remind the reader that
$\partial_N=\nu_\Omega\cdot(\nabla-ib\Ab_0)$ and $\nu_\Omega$ is
unit outward normal of the boundary $\partial\Omega=\Gamma$.

The operator $B_n$ is compact, since the kernel $ E(x,y)$ has a
logarithmic singularity at the diagonal $x=y$. Therefore, under the
hypothesis that $|\gamma|$ is sufficiently large, we can invert the
operator $B_n+(\gamma+\frac12)I$ in $L^2(\R^2)$. That way, similar
to Lemma~\ref{lem-green}, using the results in \cite[Chapter~7,
Section~11]{tay2}, we can write,
\begin{equation}\label{eq:fhelp}
u(x)=\int_{\Gamma}(\partial_N)_y
E(x,y)\left(B_n+\left(\gamma+\frac12\right)I\right)^{-1}u(y)\,d
S(y),
 \quad x\in K^\circ\,,
\end{equation}
for all $u\in P_n(L^2(\R^2))$.  Thus \eqref{eq:fhelp} gives us the
inequality $\|u\|_{L^2(K_0)} \leq C' \|u\|_{L^2(\Gamma)}$, which is
 sufficient to deduce the estimate in \eqref{eq-lem-ppp}
above.

\noindent{\it Step~2. Upper bound.}\\
Now we establish the upper bound in \eqref{eq:inequality}. This part
is actually quiet easy.  Let $f\in L^2(\R^2)$ and $u=P_nf$, the
projection on the eigenspace $\mathcal L_n$. Notice that the trace
theorem gives,
\begin{displaymath}
\|u\|_{L_2(\Gamma)}\|u\|_{H^1(\Gamma)}\leq C
\|u\|_{H^{1/2}(K)}\|u\|_{H^{3/2}(K)}\,,
\end{displaymath}
for some positive constant
$C$. Invoking the Sobolev-Rellich embedding theorem, we get for a
possibly new constant $C$,
\begin{equation*}
\|u\|_{L_2(\Gamma)}\|u\|_{H^1(\Gamma)}\leq
 C
\|u\|_{H^2(K)}^2\,.
\end{equation*}
Notice that $L_nu=Lu-\Lambda_nu=0$. Then by elliptic regularity,
given a domain $K_1$ such that $K\subset K_1$, there exists a
constant $C_{K_1}$ such that,
\begin{displaymath}
\|u\|_{H_2(\Gamma)}
\leq C_{K_1}
\left(\|L_nu\|_{L^2(K_1)}+\|u\|_{L_2(K_1)}\right)=C_{K_1}\|u\|_{L^2(K_1)}.
\end{displaymath}
Summing up, we get,
\begin{displaymath}
\|P_nf\|_{L_2(\Gamma)}\|P_nf\|_{H^1(\Gamma)}\leq
C\|P_nf\|_{L^2(K_1)}^2\,,\quad\forall~f\in L^2(\R^2)\,.
\end{displaymath}
Substituting the above inequality in the estimate of
Lemma~\ref{lem:Tn2}, we get the upper bound announced in
\eqref{eq:inequality}.
\end{proof}

\section*{Acknowledgements}
AK is supported by a Starting Independent Researcher
  grant by the ERC under the FP7. MP is  supported by the Lundbeck Foundation.
Part of this work has been prepared in ESI - Vienna which is gratefully acknowledged.

\end{document}